\DeclareFontFamily{OT2}{cmr}{\hyphenchar\font45}
\DeclareFontShape{OT2}{cmr}{m}{l}{%
<5><6><7><8><9>gen*wncyr%
<10><10.95><12><14.4><17.28><20.74><24.88>wncyr10}{}
\DeclareMathAlphabet{\mathcyr}{OT2}{cmr}{m}{l}
\newtheorem{thm}{Theorem}[section]
\newtheorem{lem}[thm]{Lemma}
\newtheorem{prop}[thm]{Proposition}
\newtheorem{cor}[thm]{Corollary}
\theoremstyle{definition}
\theoremstyle{remark}
\newtheorem{rem}[thm]{Remark}
\newcommand{\be}{\boldsymbol{e}}
\newcommand{\bk}{\boldsymbol{k}}
\newcommand{\bl}{\boldsymbol{l}}
\newcommand{\bQ}{\mathbb{Q}}
\newcommand{\bR}{\mathbb{R}}
\newcommand{\bZ}{\mathbb{Z}}
\newcommand{\cA}{\mathcal{A}}
\newcommand{\cF}{\mathcal{F}}
\newcommand{\cI}{\mathcal{I}}
\newcommand{\cS}{\mathcal{S}}
\newcommand{\cZ}{\mathcal{Z}}
\newcommand{\frh}{\mathfrak{h}}
\newcommand{\frH}{\mathfrak{H}}
\newcommand{\frZ}{\mathfrak{Z}}
\newcommand{\ra}{\rightarrow}
\newcommand{\ua}{\uparrow}
\newcommand{\da}{\downarrow}
\DeclareMathOperator{\dep}{dep}
\DeclareMathOperator{\wt}{wt}
\begin{document}

\title[Ohno-type relation for interpolated MZV]{Ohno-type relation for interpolated multiple zeta values}

\author{Minoru Hirose}
\address[Minoru Hirose]{Institute For Advanced Research, Nagoya University, Furo-cho, Chikusa-ku, Nagoya, 464-8602, Japan}
\email{minoru.hirose@math.nagoya-u.ac.jp}

\author{Hideki Murahara}
\address[Hideki Murahara]{The University of Kitakyushu, 4-2-1 Kitagata, Kokuraminami-ku, Kitakyushu, Fukuoka, 802-8577, Japan}
\email{hmurahara@mathformula.page}

\author{Masataka Ono}
\address[Masataka Ono]{Global Education Center, Waseda University, 1-6-1, Nishi-Waseda, Shinjuku-ku, Tokyo, 169-8050, Japan}
\email{m-ono@aoni.waseda.jp}

\keywords{Multiple zeta(-star) values, Interpolated multiple zeta values, Ohno-type relations}
\subjclass[2010]{Primary 11M32; Secondary 05A19}

\begin{abstract}
We prove the Ohno-type relation for the interpolated multiple zeta values, which was introduced first by Yamamoto. Same type results for finite multiple zeta values are also given. Moreover, these relations give the sum formula for interpolated multiple zeta values and interpolated $\cF$-multiple zeta values, which were proved by Yamamoto and Seki, respectively.
\end{abstract}

\maketitle

\section{Introduction}
\subsection{Ohno-type relation for multiple zeta (star) values}
For a non-negative integer $r$ and an $r$-tuple of non-negative integers $\be=(e_1, \ldots, e_r)$, we set $\wt(\be) \coloneqq e_1+\cdots+e_r$ and $\dep(\be)\coloneqq r$, and we call them the \emph{weight} and $depth$, respectively. We call $\be$ an index if all the entries of $\be$ are positive. In particular, there exists a unique index of depth 0, which we call an \emph{empty index} and denote by $\varnothing$. We call an index $\bk=(k_1, \ldots, k_r)$ \emph{admissible} if $r\ge1$ and $k_r \ge2$, or $\bk =\varnothing$.

For an admissible index $\bk=(k_1, \ldots, k_r)$, \emph{the multiple zeta value} (MZV) $\zeta(\bk)$ and \emph{the multiple zeta-star value} (MZSV) $\zeta^{\star}(\bk)$ are real numbers defined by
\begin{align*}
\zeta(\bk)
\coloneqq
\sum_{1\le n_1 <\cdots<n_r}\frac{1}{n^{k_1}_1\cdots n^{k_r}_r},
\qquad
\zeta^{\star}(\bk)
\coloneqq
\sum_{1\le n_1 \le \cdots \le n_r}\frac{1}{n^{k_1}_1\cdots n^{k_r}_r}.
\end{align*}
We set $\zeta(\varnothing)=\zeta^{\star}(\varnothing) \coloneqq 1$. 
It is known that there exist many $\bQ$-linear relations among MZ(S)Vs. In this paper, we focus on Ohno-type relations for MZVs and MZSVs. For an admissible index $\bk=(k_1, \ldots, k_r)$, we can write
\begin{align*}
\bk=(\underbrace{1, \ldots, 1}_{a_1-1}, b_1+1, \ldots, \underbrace{1, \ldots, 1}_{a_s-1}, b_s+1)
\end{align*}
for $a_p, b_q \ge1$. Then we define \emph{the dual index} $\bk^{\dagger}$ of $\bk$ by
\begin{align*}
\bk^{\dagger}
\coloneqq
(\underbrace{1, \ldots, 1}_{b_s-1}, a_s+1, \ldots, \underbrace{1, \ldots, 1}_{b_1-1}, a_1+1).
\end{align*}
For example, we have $(2,1,3)^{\dagger}=(\{1\}^{1-1}, 1+1, \{1\}^{2-1}, 2+1)^{\dagger}=(\{1\}^{2-1}, 2+1, \{1\}^{1-1}, 1+1)=(1,3,2)$. We set $\varnothing^{\dagger} \coloneqq \varnothing$. For $\bk=(k_1, \ldots, k_r)$ and $\be=(e_1, \ldots, e_r)$, set $\bk \oplus \be \coloneqq (k_1+e_1, \ldots, k_r+e_r)$. Moreover, we set $\varnothing \oplus \varnothing \coloneqq \varnothing$.

\begin{thm}[{Ohno-type relation for MZV, Ohno \cite{Ohn99}}]\label{thm:Ohno-type_MZV}
For an admissible index $\bk$ and a non-negative integer $m$, we have
\begin{align*}
\sum_{\substack{\wt(\be)=m \\ \dep(\be)=\dep(\bk)}}
\zeta(\bk \oplus \be)
=
\sum_{\substack{\wt(\be)=m \\ \dep(\be)=\dep(\bk^{\dagger})}}
\zeta\bigl((\bk^{\dagger} \oplus \be)^{\dagger}\bigr).
\end{align*}
\end{thm}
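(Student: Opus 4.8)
The plan is to introduce an auxiliary variable $u$, reduce the statement to the assertion that the ``Ohno sum'' is invariant under index duality, and then prove that. Recall first the duality relation $\zeta(\bl)=\zeta(\bl^{\dagger})$ for admissible $\bl$: writing $\zeta(\bl)=\int_{0<t_1<\cdots<t_N<1}\varphi_1(t_1)\cdots\varphi_N(t_N)$ with $N=\wt(\bl)$, where each $\varphi_i\in\{\,\tfrac{dt}{t},\,\tfrac{dt}{1-t}\,\}$ is read off from $\bl$, the substitution $t_i\mapsto 1-t_{N+1-i}$ reverses the word and interchanges $\tfrac{dt}{t}\leftrightarrow\tfrac{dt}{1-t}$, giving exactly $\zeta(\bl^{\dagger})$. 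Since $\bk^{\dagger}\oplus\be$ is again admissible, applying this term by term rewrites the right-hand side of Theorem~\ref{thm:Ohno-type_MZV} as $\sum_{\wt(\be)=m,\,\dep(\be)=\dep(\bk^{\dagger})}\zeta(\bk^{\dagger}\oplus\be)$. Hence it is enough to prove, for every admissible $\bk$, the generating-function identity
\[
\Phi(\bk;u)\ :=\ \sum_{m\ge0}u^{m}\sum_{\substack{\wt(\be)=m\\ \dep(\be)=\dep(\bk)}}\zeta(\bk\oplus\be)\ =\ \Phi(\bk^{\dagger};u),
\]
and then to equate coefficients of $u^{m}$.

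Next I would make $\Phi$ explicit. Expanding each $\zeta(\bk\oplus\be)$ as an iterated sum and performing the sum over $\be$ componentwise via $\sum_{e\ge0}(u/n)^{e}=(1-u/n)^{-1}$ yields, with $r=\dep(\bk)$,
\[
\Phi(\bk;u)\ =\ \sum_{0<n_{1}<\cdots<n_{r}}\ \prod_{i=1}^{r}\frac{1}{n_i^{\,k_i-1}(n_i-u)}.
\]
Using $(n_i-u)^{-1}=\int_0^1 s^{\,n_i-1-u}\,ds$, $n_i^{-1}=\int_0^1 s^{\,n_i-1}\,ds$ and resumming in the usual way, $\Phi(\bk;u)$ becomes an iterated integral over $0<t_1<\cdots<t_N<1$ ($N=\wt(\bk)$) of a word in the $u$-deformed one-forms $\tfrac{dt}{t},\ \tfrac{dt}{1-t},\ \tfrac{dt}{t^{1-u}},\ \tfrac{dt}{(1-t)\,t^{u}}$; equivalently, in the integral picture the sum over $\be$ merely inserts, across the gap immediately after each of the $r$ forms $\tfrac{dt}{1-t}$ (say at a variable $t$, with next variable $t'$), the factor $(t'/t)^{u}$.

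The crux — and the step I expect to be the real obstacle — is to prove that this $u$-deformed iterated integral is duality invariant, i.e. $\Phi(\bk;u)=\Phi(\bk^{\dagger};u)$. The naive substitution $t_i\mapsto 1-t_{N+1-i}$ does \emph{not} close the argument here: it does not preserve the class of deformed forms (the exponents on $t$ and on $1-t$ get mismatched, and in effect one is forced into $u\mapsto -u$), so the value must be pinned down by a genuinely finer argument. I would use a connected-sum transport: introduce a $u$-deformed connector of Beta-integral type and a connected double series $Z(\bk',\bl';u)$ with $Z(\bk,\varnothing;u)=\Phi(\bk;u)$, establish a short list of transport relations that move one index entry at a time across the connector (together with the bookkeeping that produces the dagger), and iterate them to reach $Z(\varnothing,\bk^{\dagger};u)=\Phi(\bk^{\dagger};u)$ — the same mechanism as in the connected-sum proof of ordinary duality, the new point being to locate the correct $u$-deformation of the connector and to check the transport identities. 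Alternatively, one can follow Ohno's original route, recognizing $\Phi(\bk;u)$ through the Beta/hypergeometric integral for the factors $(n_i-u)^{-1}$ as an explicit symmetric quantity whose required symmetry is a classical transformation of generalized hypergeometric series, or prove $\Phi(\bk;u)=\Phi(\bk^{\dagger};u)$ by induction on $m$ from the derivation relations and duality. Once $\Phi(\bk;u)=\Phi(\bk^{\dagger};u)$ is in hand, Theorem~\ref{thm:Ohno-type_MZV} follows by comparing the coefficients of $u^{m}$.
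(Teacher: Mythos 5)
Your preparatory reductions are correct: since $\bk^{\dagger}\oplus\be$ is again admissible, termwise duality rewrites the right-hand side as $\sum_{\wt(\be)=m,\ \dep(\be)=\dep(\bk^{\dagger})}\zeta(\bk^{\dagger}\oplus\be)$, the geometric-series computation giving $\Phi(\bk;u)=\sum_{0<n_1<\cdots<n_r}\prod_{i}n_i^{-(k_i-1)}(n_i-u)^{-1}$ is right, and your integral picture (inserting a factor $(t'/t)^{u}$ after each $\frac{dt}{1-t}$) does represent $\Phi$, as does your observation that the substitution $t_i\mapsto 1-t_{N+1-i}$ fails to preserve these deformed forms. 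But the identity you reduce everything to, $\Phi(\bk;u)=\Phi(\bk^{\dagger};u)$, is not a lemma on the way to the theorem --- it \emph{is} Ohno's theorem, stated in generating-function form, and your proposal stops exactly there. What you offer for it is a menu of strategies rather than an argument: a connected sum whose $u$-deformed connector you do not write down and whose transport relations you do not verify (you yourself say the ``new point'' is to locate the connector and check the identities); Ohno's original hypergeometric route, not carried out; or an appeal to the derivation relation, which is a theorem at least as deep as the one being proved and would be imported without proof. So the proof has a genuine gap precisely at the step carrying all the content.

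For calibration: the paper itself does not prove Theorem~\ref{thm:Ohno-type_MZV} at all --- it is quoted from Ohno \cite{Ohn99} and used only as the hypothesis that $Z=\zeta$ satisfies the Ohno-type relation, so there is no internal argument your write-up could be matched against, and nothing in the paper fills the missing step. To make your proposal a proof you would have to execute one of your routes in full, e.g.\ set up the connected double sum with the appropriate $\Gamma$-factor connector depending on $u$ (as in Seki and Yamamoto's connected-sum proof of Ohno's relation), prove the two transport relations by the relevant partial-fraction/telescoping identities, and check the boundary conditions $Z(\bk,\varnothing;u)=\Phi(\bk;u)$ and $Z(\varnothing,\bk^{\dagger};u)=\Phi(\bk^{\dagger};u)$; only then does comparing coefficients of $u^{m}$ yield the stated relation.
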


\begin{rem}
In fact, Ohno \cite{Ohn99} proved wider class of $\bQ$-linear relations among MZVs commonly called \emph{Ohno relation}.
\end{rem}

\begin{thm}[{Ohno-type relation for MZSV, Hirose--Imatomi--Murahara--Saito \cite{HIMS20}}]\label{thm:Ohno-type_MZSV}
For an admissible index $\bk$ and a non-negative integer $m$, we have
\begin{align*}
\sum_{\substack{\wt(\be)=m \\ \dep(\be)=\dep(\bk)}}
c_1(\bk,\be)\zeta^{\star}(\bk \oplus \be)
=
\sum_{\substack{\wt(\be)=m \\ \dep(\be)=\dep(\bk^{\dagger})}}
\zeta^{\star}\bigl((\bk^{\dagger} \oplus \be)^{\dagger}\bigr).
\end{align*}
Here, $c_1(\bk, \be)=1$ if $\bk=\varnothing$, and
\begin{align*}
c_1((k_1, \ldots, k_r),(e_1, \ldots, e_r))
\coloneqq
\prod_{i=1}^r\binom{k_i+e_i+\delta_{i,1}-2}{e_i},
\quad
\binom{n-1}{n}
=
\begin{cases}
1 & \text{if $n=0$}, \\
0 & \text{otherwise},
\end{cases}
\end{align*}
and $\delta_{x,y}$ is Kronecker's delta.
\end{thm}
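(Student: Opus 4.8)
The plan is to deform both sides into a generating series in an auxiliary variable $x$, to recognise that series as an iterated integral over the standard simplex whose integrand carries an $x$-twist, and then to read off the relation from the invariance of the iterated integral under reversal of the path of integration. The first point is that the binomial weights in $c_1(\bk,\be)$ are exactly the ones that collapse a geometric series: since $\sum_{e\ge0}\binom{k+e-1}{e}(x/n)^{e}=(1-x/n)^{-k}$ and $\sum_{e\ge0}\binom{k+e-2}{e}(x/n)^{e}=(1-x/n)^{-(k-1)}$, for $\bk=(k_1,\ldots,k_r)$ one computes
\begin{align*}
\sum_{m\ge0}x^{m}\sum_{\substack{\wt(\be)=m\\\dep(\be)=r}}c_1(\bk,\be)\,\zeta^{\star}(\bk\oplus\be)
=\sum_{1\le n_1\le\cdots\le n_r}\frac{1}{(n_1-x)^{k_1}}\prod_{i=2}^{r}\frac{1}{n_i(n_i-x)^{k_i-1}}.
\end{align*}
Thus the correction $\delta_{i,1}$ is precisely what makes the right-hand series have a transparent integral representation.

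Next I would use the iterated-integral description of $\zeta^{\star}$. With $\omega_0=dt/t$, $\omega_1=dt/(1-t)$, and $I$ the iterated integral over $0<t_1<\cdots<t_{\wt(\bk)}<1$, one has $\zeta^{\star}(\bk)=\sum_{\bk'}\zeta(\bk')$ with $\bk'$ ranging over the contractions of $\bk$, which on the level of words means replacing each of the $r-1$ interior letters $\omega_1$ in $\omega_1\omega_0^{k_1-1}\omega_1\omega_0^{k_2-1}\cdots\omega_1\omega_0^{k_r-1}$ by $\omega_1$ or $\omega_0$. I would then insert twists $t^{-x}$ on the first block and $t^{\pm x}$ on the later blocks, arranged so that the shift introduced at the start of a block is absorbed by its end, producing a word $W_{\bk}(x)$ whose integral $I(W_{\bk}(x))$ equals the generating series displayed above; the asymmetry between the first block and the rest mirrors the $\delta_{i,1}$. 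Applying the substitution $t\mapsto1-t$, which reverses the word and interchanges $\omega_0\leftrightarrow\omega_1$, and bookkeeping how the twists transform, one should identify $I(W_{\bk}(x))$ with the generating series $\sum_{m}x^{m}\sum_{\be}\zeta^{\star}\bigl((\bk^{\dagger}\oplus\be)^{\dagger}\bigr)$ of the right-hand side; here Hoffman's duality $\zeta(\bl)=\zeta(\bl^{\dagger})$ is used at the level of words, together with the fact that $\dagger$ interchanges "contraction of an index'' with "splitting of an entry into two positive parts''. Comparing coefficients of $x^{m}$ then gives the theorem.

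The main obstacle is exactly this last bookkeeping: under $t\mapsto1-t$ a twist $t^{-x}\,dt/t$ becomes $(1-t)^{-x}\,dt/(1-t)$, so the twisted forms are not simply permuted among themselves, and one must either work with a sufficiently symmetric family of twisted forms or argue directly with the twisted multiple series together with a regularisation handling the non-admissible indices $\bk^{\dagger}\oplus\be$ (which may begin with $1$). A technically cleaner alternative, which also covers the finite multiple zeta value analogue announced in the abstract, would be the connected-sum method of Seki--Yamamoto: introduce a two-parameter series $Z(\bk;\bl)$ joined by a Beta-function connector and carrying an extra weight recording the Ohno parameter, establish a transport relation moving indices from the $\bl$-side to the $\bk$-side one block at a time, and read off the two sides of the identity at the two extreme configurations; in that framework the weights $c_1(\bk,\be)$ and the shift $\delta_{i,1}$ should fall out of the shape of the connector. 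Either way, the heart of the argument is the compatibility of the dual $\dagger$, contraction, and the binomial weights.
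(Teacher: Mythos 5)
Your reduction of the left-hand side to the twisted series $\sum_{1\le n_1\le\cdots\le n_r}(n_1-x)^{-k_1}\prod_{i=2}^{r}n_i^{-1}(n_i-x)^{-(k_i-1)}$ is correct: the weights $c_1(\bk,\be)$, including the $\delta_{i,1}$ shift, do collapse geometric series exactly as you say. From that point on, however, the proposal is a plan rather than a proof, and the gap sits precisely where you yourself flag ``the main obstacle''. You never exhibit a concrete word $W_{\bk}(x)$ of twisted forms whose iterated integral equals this series, nor do you verify how such a word transforms under $t\mapsto 1-t$; as you observe, the family $t^{-x}\,dt/t$, $t^{-x}\,dt/(1-t)$, \dots\ is not stable under the reversal (it produces $(1-t)^{-x}\,dt/(1-t)$ and the like), so the ``bookkeeping'' is not a routine check but the entire content of the theorem --- in particular it is exactly where the asymmetry between the star-side weights $\binom{k_i+e_i+\delta_{i,1}-2}{e_i}$ and the unweighted dual side must emerge. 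The fallback you offer (a Seki--Yamamoto connected sum with an unspecified connector and transport relation) is likewise only the name of a strategy; finding a connector whose transport produces these asymmetric weights is the nontrivial work, and none of it is done. A smaller point: the indices $(\bk^{\dagger}\oplus\be)^{\dagger}$ on the right are automatically admissible (the dual of an admissible index is admissible), so no regularisation is needed there; any regularisation issue concerns only your intermediate twisted objects.

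For comparison, the paper does not prove this statement directly at all: it is quoted from \cite{HIMS20}, and inside the paper it is recovered as the $t=1$ specialisation of Theorem \ref{thm:main} applied to $(R,Z)=(\bR,\zeta)$, taking Ohno's relation for MZVs (Theorem \ref{thm:Ohno-type_MZV}) as the input; the only computation needed is $g_m(\bk;1)=\sum_{\be}c_1(\bk,\be)\,(\bk\oplus\be)$ together with $\zeta^{1}=\zeta^{\star}$. So even a completed version of your sketch would follow a genuinely different route (a direct twisted-integral or connected-sum duality argument, essentially redoing \cite{HIMS20}) rather than the paper's interpolation machinery; as it stands, the key duality step is missing and the statement is not yet proved.
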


\subsection{Interpolated MZV}
This subsection describes the Ohno-type relation for \emph{interpolated MZVs} that interpolates both Theorems~\ref{thm:Ohno-type_MZV} and \ref{thm:Ohno-type_MZSV}. For an admissible index $\bk=(k_1, \ldots, k_r)$, Yamamoto \cite{Yam13} defined \emph{the interpolated multiple zeta value} $\zeta^{t}(\bk)$ by
\begin{equation*} \label{eq:def_tMZV}
\zeta^{t}(\bk)
\coloneqq
\sum_{\substack{\square\textrm{ is either a comma `,' } \\
 \textrm{ or a plus `+'}}}
 t^{(\text{the number of `+'})}
 \zeta(k_1 \square k_2 \square \cdots \square k_r) \in \bR[t]
\end{equation*}
and $\zeta^{t}(\varnothing)\coloneqq1$. Note that the interpolated MZV is a common generalization of MZV and MZSV since $\zeta^0(\bk)=\zeta(\bk)$ and $\zeta^1(\bk)=\zeta^{\star}(\bk)$.
Many $\bQ[t]$-linear relations among interpolated MZV $\zeta^{t}(\bk)$ are known (for example, see \cite{Li19}, \cite{LQ17}, \cite{TW16}, and \cite{Wak17}). In this paper, we prove the Ohno-type relation for interpolated MZVs. 

\subsection{Main result}
Let $\cI$ (resp.~ $\cI^t$) be the set of formal $\bQ$-linear (resp.~ $\bQ[t]$-linear) sums of indices, $R$ a $\bQ$-linear vector space, and $Z \colon \cI \rightarrow R$ a $\bQ$-linear map. If the equality
\begin{align*}
\sum_{\substack{\wt(\be)=m \\ \dep(\be)=\dep(\bk)}}
Z(\bk \oplus \be)
=
\sum_{\substack{\wt(\be)=m \\ \dep(\be)=\dep(\bk^{\dagger})}}
Z\bigl((\bk^{\dagger} \oplus \be)^{\dagger}\bigr)
\end{align*}
holds for any admissible index $\bk$ and any non-negative integer $m$, we say that \textit{$Z$ satisfies the Ohno-type relation}. For example, if $R=\bR$, then $Z=\zeta$ satisfies the Ohno-type relation by Theorem~\ref{thm:Ohno-type_MZV}.

For a non-empty index $\bk=(k_1, \ldots, k_r)$ and a non-negative integer $m$, define $g_m(\bk; t) \in \cI^t$ by
\begin{align*}
 &g_m(\bk;t) \\
 &\coloneqq \sum_{l=1}^{r}(-t(1-t))^{r-l}
  \sum_{\substack{\be=(e_1, \ldots, e_l) \in \bZ^{l}_{\ge0} \\ e_{1}+\cdots+e_{l}=m}}
  \sum_{1=i_{1}<\cdots<i_{l+1}=r+1}
  \prod_{l'=1}^{l}f_{i_{l'+1}-i_{l'}-1}(k_{i_{l'}}'+\cdots+k'_{i_{l'+1}-1},e_{l'}) \\
 &\qquad \times\left((k_{i_{1}}+\cdots+k_{i_{2}-1},\dots,k_{i_{l}}+\cdots+k_{i_{l+1}-1})\oplus\boldsymbol{e}\right),
\end{align*}
where
\begin{equation*}
 f_{i}(k,e)
 \coloneqq \sum_{j=0}^{e}{e-j \choose i}{k+e-i-2 \choose j}t^{j}(1-t)^{e-i-j} \in \bQ[t],
 \qquad
k_{j}' \coloneqq k_{j}+\delta_{j,1}.
\end{equation*}
We set $f_i(k, -1)\coloneqq0, g_{-1}(\bk; t) \coloneqq 0$, and $g_{m}(\varnothing; t)\coloneqq \delta_{m,0}\cdot\varnothing$. We define a $\bQ[t]$-linear map $I^t \colon \cI^t \rightarrow \cI^t$ by $I^{t}(\varnothing) \coloneqq \varnothing$ and
\begin{align*}
I^{t}(k_1, \ldots, k_r)
\coloneqq
\sum_{\substack{\square\textrm{ is either a comma `,' } \\
 \textrm{ or a plus `+'}}}
 t^{(\text{the number of `+'})}
(k_1 \square k_2 \square \cdots \square k_r) \in \cI^t.
\end{align*}
Note that if we extend the definition of MZV $\zeta$ and interpolated MZV $\zeta^{t}$ to the $\bQ[t]$-liner maps $\cI^t \rightarrow \bR[t]$, we have $\zeta^{t}=\zeta \circ I^t$.


\begin{thm}[{Main Theorem}] \label{thm:main}
Let $R$ be a $\bQ$-vector space and $Z \colon \cI \rightarrow R$ a $\bQ$-linear map satisfying the Ohno-type relation. We extend $Z$ to the $\bQ[t]$-linear map $Z \colon \cI^t \rightarrow R\otimes\bQ[t]$. Set $Z^t \coloneqq Z\circ I^t$. Then, for any admissible index $\bk$ and a non-negative integer $m$, we have
 \begin{align}\label{eq:main}
  Z^{t}\bigl(g_m(\bk;t)\bigr)
  =\sum_{\substack{\wt (\be)=m \\ \dep (\be)=\dep (\bk^{\dagger})}}
  Z^{t}\bigl((\bk^{\dagger}\oplus\be)^{\dagger}\bigr).
 \end{align}
\end{thm}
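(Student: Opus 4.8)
The strategy is to reduce the Main Theorem to the known Ohno-type relation for $Z$ itself (Theorem~\ref{thm:Ohno-type_MZV} for $Z=\zeta$, and its abstract version for general $Z$) by showing that the operator $g_m(\cdot\,;t)$, once composed with $I^t$, matches the ``$\be$-summed'' left-hand side in the definition of the Ohno-type relation. Concretely, I would first establish the key identity at the level of the index algebra: for an admissible index $\bk$,
\begin{align*}
 I^{t}\bigl(g_m(\bk;t)\bigr)
 = \sum_{\substack{\wt(\be)=m\\ \dep(\be)=\dep(\bk)}} I^{t}(\bk\oplus\be)
 \quad\text{modulo }\ker Z.
\end{align*}
Granting this, applying $Z$ and then using that $Z$ satisfies the Ohno-type relation converts the right-hand side into $\sum_{\wt(\be)=m,\ \dep(\be)=\dep(\bk^{\dagger})} Z\bigl(I^{t}((\bk^{\dagger}\oplus\be)^{\dagger})\bigr)$, which is exactly $\sum Z^{t}((\bk^{\dagger}\oplus\be)^{\dagger})$, as wanted. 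So the whole problem becomes a purely combinatorial statement about the polynomial coefficients $f_i(k,e)$ and the bookkeeping factors $(-t(1-t))^{r-l}$.

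The second step is to prove that combinatorial identity. The natural device is a generating-function / weight-generating argument: encode $\zeta^{t}$ (or $I^t$) via the stuffle-type expansion into commas and pluses, and track how adjacent ``$1$''s and ``$+$''s interact. The factor $-t(1-t)$ is exactly the signed correction $t - t(1-t)\cdot(\text{something})$ that appears when one merges two consecutive blocks; and the inner polynomial $f_i(k,e)$ with its two binomials $\binom{e-j}{i}$ and $\binom{k+e-i-2}{j}$ is the kind of expression that arises from distributing a weight increment $e$ over $i+1$ slots (the $1$'s coming from a $+$) versus a single slot of ``effective weight'' $k$. So I would set up the identity
\begin{align*}
 \sum_{\text{comma/plus patterns}} t^{\#+}\,(k_1\square\cdots\square k_r\ \text{with }\be\text{ distributed})
 = \sum_{l,\,\text{cuts},\,\be'} (-t(1-t))^{r-l}\prod f_{\cdot}(\cdot)\,(\text{contracted index}\oplus\be')
\end{align*}
and verify it by induction on the depth $r$, peeling off the last entry $k_r$ and matching the two ways the new entry can attach (as a new comma-block, or merged into the previous block with a $+$). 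The coefficients $f_i(k,e)$ should satisfy a Pascal-type recursion in $r$ that mirrors this peeling; checking that recursion against the explicit double sum is the technical heart.

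The main obstacle I anticipate is precisely verifying this $f_i$-recursion: the definition of $f_i(k,e)$ mixes $t^j(1-t)^{e-i-j}$ with the two binomials, and when a new ``$1$-run'' of length $i'$ is appended or an existing block is lengthened, one has to show the resulting sum over $j$ telescopes or re-indexes to the $f$-coefficient at the new parameters. A clean way to handle this is to introduce the generating function $\sum_{e\ge 0} f_i(k,e)\,x^e$ (or a bivariate one also tracking $i$), identify it with an explicit rational expression in $x,t$, and then the recursion becomes an elementary manipulation of rational functions rather than of binomial sums. The special cases $\bk=\varnothing$ and the edge conventions $f_i(k,-1)=0$, $g_{-1}=0$, $g_m(\varnothing;t)=\delta_{m,0}\varnothing$ should then fall out automatically, and the passage from the index-algebra identity to the stated equation \eqref{eq:main} is immediate once $Z^t=Z\circ I^t$ is recalled. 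An alternative, possibly cleaner, route would be to prove the identity directly for $Z=\zeta^{t}$ using the iterated-integral (Hoffman dual) representation and Ohno's original functional-equation argument, then observe the argument only used the abstract Ohno-type relation — but I expect the combinatorial route above to be the one that isolates exactly what is needed.
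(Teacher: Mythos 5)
Your reduction rests on the claimed congruence
\begin{align*}
 I^{t}\bigl(g_m(\bk;t)\bigr)\equiv\sum_{\substack{\wt(\be)=m\\ \dep(\be)=\dep(\bk)}} I^{t}(\bk\oplus\be)\pmod{\ker Z},
\end{align*}
and this is false. Take $Z=\zeta$, $t=1$, $\bk=(2)$, $m=1$: the left side is $Z^{t}(g_1((2);1))=f_0(3,1)|_{t=1}\,\zeta^{\star}(3)=2\zeta(3)$, while the right side gives $\zeta^{\star}(3)=\zeta(3)$. Indeed, if your congruence held, then combined with \eqref{eq:main} it would yield the coefficient-free relation $\sum_{\be} Z^{t}(\bk\oplus\be)=\sum_{\be} Z^{t}\bigl((\bk^{\dagger}\oplus\be)^{\dagger}\bigr)$, which already fails for MZSVs --- that is exactly why Theorem~\ref{thm:Ohno-type_MZSV} needs the binomial weights $c_1(\bk,\be)$, and why $g_m(\bk;t)$ is not simply the plain $\be$-sum. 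The subsequent step, ``apply the Ohno-type relation to convert $\sum_{\be}Z(I^{t}(\bk\oplus\be))$ into the dual sum,'' misuses the hypothesis: the relation is assumed for $Z$ on indices, not for $Z^{t}=Z\circ I^{t}$, and after expanding $I^{t}$ into comma/plus contractions the $\be$-sums taken at depth $\dep(\bk)$ do not reorganize into $\be$-sums attached to the contracted indices; arranging such a reorganization is precisely what the theorem asserts, so as written the argument is circular. Moreover your plan never explains how the dual side $\sum_{\be}Z^{t}((\bk^{\dagger}\oplus\be)^{\dagger})$ is actually reached.

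What is missing is the correct intermediate object and the dual-side computation. The paper proves the exact identity $I^{t}(g_m(\bk;t))=h_m(\bk;t)$ (Proposition~\ref{prop:G=h}), whose generating function in $u$ is $\sigma(X(\bk))$, where $X(\bk)=y\bigl(\frac{x}{1-xtu}\bigr)^{k_1-1}\prod_{l\ge 2}\bigl\{(y(1-xtu)+xt)\bigl(\frac{x}{1-xtu}\bigr)^{k_l-1}\bigr\}$ is a specific $t,u$-deformation of the word of $\bk$ --- not $I^{t}(\bk)$ --- and $\sigma(y)=y(1-xu)^{-1}$ encodes the $\oplus\,\be$ summation. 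Separately, Proposition~\ref{prop:dual} computes the dual side: $\sum_m u^m\sum_{\be} I^{t}\bigl((\bk^{\dagger}\oplus\be)^{\dagger}\bigr)=\tau\sigma\tau(X(\bk))$. Since the Ohno-type relation for $Z$ is exactly the statement that $\hat{Z}\circ\sigma$ and $\hat{Z}\circ\tau\sigma\tau$ agree on admissible words, applying it word by word to $X(\bk)$ finishes the proof. Your generating-function ideas for $f_i(k,e)$ and the induction peeling off the last entry do correspond to the paper's recurrences (Propositions~\ref{prop:g_uparrow} and \ref{prop:g_rightarrow}, Lemma~\ref{lem:gen_h}), but without identifying $X(\bk)$ and carrying out the $\tau\sigma\tau$ computation there is no route from your left-hand side to the dual sum, and the first reduction step cannot be repaired as stated.
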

\begin{cor}
For any admissible index $\bk$ and a non-negative integer $m$, \[
\sum_{\substack{\wt (\be)=m \\ \dep (\be)=\dep (\bk^{\dagger})}}
Z^{t}\bigl((\bk^{\dagger}\oplus\be)^{\dagger}\bigr)
\] is a $\bQ[t]$-linear combination of $Z(\bl)$'s with $\dep(\bl)\leq \dep(\bk)$.
\end{cor}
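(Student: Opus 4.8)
The corollary is almost immediate from the Main Theorem, so the plan is to unpack the structure of $g_m(\bk;t)$ and observe what the map $Z^t$ does to it. First I would note that $Z^t = Z\circ I^t$ and that $I^t$ preserves depth in the weak sense: $I^t(k_1,\ldots,k_r)$ is a $\bQ[t]$-linear combination of indices of depth $\le r$, since each choice of separators `,'/`+' produces an index obtained by contracting consecutive blocks. Hence for any index $\bl$, $Z^t(\bl)$ is a $\bQ[t]$-linear combination of $Z(\bl')$ with $\dep(\bl')\le\dep(\bl)$.

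Next I would examine the explicit formula for $g_m(\bk;t)$. Each term in the triple sum is a scalar in $\bQ[t]$ (the product of $(-t(1-t))^{r-l}$ with the $f$-factors) times the index $(k_{i_1}+\cdots+k_{i_2-1},\ldots,k_{i_l}+\cdots+k_{i_{l+1}-1})\oplus\be$, which has depth exactly $l\le r=\dep(\bk)$. Applying $Z^t$ and using the previous observation, $Z^t(g_m(\bk;t))$ is therefore a $\bQ[t]$-linear combination of $Z(\bl)$'s with $\dep(\bl)\le\dep(\bk)$. By Theorem~\ref{thm:main} (the Main Theorem), this expression equals the right-hand side of \eqref{eq:main}, which is precisely the sum in the corollary. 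Thus the corollary follows.

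The only mild subtlety to check is the degenerate/edge cases: when $\bk=\varnothing$ both sides reduce to $Z^t(\varnothing)=Z(\varnothing)$, which trivially satisfies the bound; and one should confirm that the convention $I^t(\varnothing)=\varnothing$ together with $g_m(\varnothing;t)=\delta_{m,0}\cdot\varnothing$ is consistent with the depth count (depth $0\le 0$). There is essentially no obstacle here — the corollary is a formal consequence of the shape of $g_m(\bk;t)$ (all component indices have depth $\le\dep(\bk)$) plus the depth-nonincreasing property of $I^t$, combined with the Main Theorem. If I wanted to present it more carefully I would state a short lemma: \emph{for every index $\bl$, $I^t(\bl)\in\cI^t$ is a $\bQ[t]$-linear combination of indices of depth at most $\dep(\bl)$}, prove it by the definition of $I^t$ (replacing a comma by a plus strictly decreases depth by one), and then deduce the corollary in two lines.
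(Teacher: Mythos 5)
Your proposal is correct and is exactly the argument the paper intends (the corollary is stated without proof as an immediate consequence of Theorem~\ref{thm:main}): every index appearing in $g_m(\bk;t)$ has depth $l\le\dep(\bk)$, and $I^t$ only merges entries, so $Z^t\bigl(g_m(\bk;t)\bigr)$ is a $\bQ[t]$-combination of $Z(\bl)$ with $\dep(\bl)\le\dep(\bk)$, which by \eqref{eq:main} equals the sum in question. Your treatment of the edge case $\bk=\varnothing$ is also consistent with the paper's conventions, so there is nothing to add.
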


\begin{rem}
\begin{enumerate}
\item For $\bk=(k_1, \ldots, k_r)$, we have
 \begin{align*}
  g_m(\bk;0)
  &=\sum_{\substack{e_{1}+\cdots+e_{r}=m \\ e_{1},\dots,e_{r}\ge 0 }}
   \biggl\{\prod_{i=1}^{r}f_{0}(k_{i}',e_{i})\biggr\}\times(k_{1}+e_{1},\dots,k_{r}+e_{r})\\
  &=\sum_{\substack{e_{1}+\cdots+e_{r}=m \\ e_{1},\dots,e_{r}\ge 0 }}(k_{1}+e_{1},\dots,k_{r}+e_{r})
 \end{align*}
 and
 \begin{align*}
  g_m(\bk;1)
  &=\sum_{\substack{e_{1}+\cdots+e_{r}=m \\ e_{1},\dots,e_{r}\ge 0 }}
   \prod_{i=1}^{r}
   \binom{k_{i}+e_{i}+\delta_{i,1}-2}{e_{i}}
   \times(k_{1}+e_{1},\dots,k_{r}+e_{r}).
 \end{align*}
 Therefore, if $(R, Z)=(\bR, \zeta)$, this theorem gives the Ohno-type relations for MZVs (Theorem~\ref{thm:Ohno-type_MZV}) and MZSVs (Theorem~\ref{thm:Ohno-type_MZSV}).
\item The cases $r=1,2$ are described as follows:
\begin{align}
&Z^t\bigl(g_m((k);t)\bigr) = f_0(k+1,m)Z(k+m),\label{eq:dep1}\\
&Z^t\bigl(g_m((k_1,k_2);t)\bigr)\label{eq:dep2}\\
&=-t(1-t)f_1(k_1+k_2+1, m)Z(k_1+k_2+m)\notag\\
&\quad +\sum_{\substack{e_1+e_2=m \\ e_1, e_2 \ge0}}f_0(k_1+1, e_1)f_0(k_2, e_2)
\bigl\{
Z(k_1+e_1, k_2+e_2)+Z(k_1+k_2+m)t
\bigr\}.\notag
\end{align}
In Section \ref{sec:applications}, we deduce the sum formulas for interpolated MZVs and $\cF$-MZVs from (\ref{eq:dep1}) and the special case of (\ref{eq:dep2}), respectively.
\end{enumerate}
\end{rem}
%

This paper is organized as follows. In Section 2, we give the recurrence relation of $g_m(\bk; t)$ which plays a key role for the proof of our main theorem. In Section 3, we prove our main theorem. In Section 4, as an application, we give the Ohno-type relations for interpolated $\cF$-MZVs. Moreover, we deduce the sum formula for interpolated MZV and $\cF$-MZV from each Ohno-type relation.


\section{Recurrence relation of $g_m(\bk; t)$ and $G_m(\bk; t)$}
In this section, we give the recurrence relations of $g_m(\bk; t)$ and $G_m(\bk; t) \coloneqq I^t(g_m(\bk; t))$. First, we introduce the arrow notation. For a non-empty index $\bk=(k_1, \ldots, k_r)$, set
\begin{align*}
\bk_{\ua}
\coloneqq (k_1, \ldots, k_{r-1}, k_r+1),
\quad
\bk_{\ra}
\coloneqq (k_1, \ldots, k_r, 1).
\end{align*}
Moreover, if $w =\sum_{i=1}^n a_i(t)\bk_i \in \cI^t$ and $\bk_1, \ldots, \bk_n \neq \varnothing$, we define $w_{\ua} \coloneqq \sum_{i=1}^n a_i(t)(\bk_i)_{\ua}$, $w_{\ra} \coloneqq \sum_{i=1}^n a_i(t)(\bk_i)_{\ra}$, respectively.
\begin{prop}\label{prop:g_noarrow} For a non-negative integer $m$, we have
\begin{align*}
g_m((1);t) = (1)\oplus(m).
\end{align*}
\end{prop}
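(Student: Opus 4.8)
The plan is to unwind the definition of $g_m(\bk;t)$ in the special case $\bk=(1)$, which is the smallest possible case and so involves essentially no combinatorics beyond tracking the single summand. First I would observe that since $r=\dep((1))=1$, the outer sum over $l$ collapses to the single term $l=1$, for which the prefactor $(-t(1-t))^{r-l}=(-t(1-t))^{0}=1$. With $l=1$, the only admissible choice in the sum $1=i_1<\cdots<i_{l+1}=r+1$ is $i_1=1$, $i_2=2$, so the inner product over $l'$ is the single factor $f_{i_2-i_1-1}(k_1',e_1) = f_{0}(k_1',e_1)$, and the accompanying index is $(k_{i_1}+\cdots+k_{i_2-1})\oplus\be = (k_1)\oplus(e_1)$.

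Next I would substitute $k_1=1$. Here $k_1'=k_1+\delta_{1,1}=2$, and the sum over $\be=(e_1)\in\bZ^1_{\ge0}$ with $e_1=m$ has the unique term $e_1=m$. Thus $g_m((1);t) = f_0(2,m)\cdot\bigl((1)\oplus(m)\bigr)=f_0(2,m)\cdot(m+1)$. It then remains to evaluate the coefficient $f_0(2,m)$. From the definition $f_i(k,e)=\sum_{j=0}^{e}\binom{e-j}{i}\binom{k+e-i-2}{j}t^j(1-t)^{e-i-j}$, setting $i=0$ and $k=2$ gives $f_0(2,m)=\sum_{j=0}^{m}\binom{m-j}{0}\binom{m-j}{j}t^j(1-t)^{m-j}$; but $\binom{m-j}{0}=1$ always, and one checks $\binom{m-j}{j}$ is — wait, I should be careful: with $k=2,i=0$ the second binomial is $\binom{k+e-i-2}{j}=\binom{m}{j}$, not $\binom{m-j}{j}$. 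So $f_0(2,m)=\sum_{j=0}^m\binom{m}{j}t^j(1-t)^{m-j}=(t+(1-t))^m=1$ by the binomial theorem. Hence $g_m((1);t)=(1)\oplus(m)=(m+1)$, matching the claim since $(1)\oplus(m)$ is the intended right-hand side.

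I do not expect a genuine obstacle here: this is a direct specialization and the only mildly delicate point is correctly reading off the binomial arguments in $f_0$ and recognizing the telescoping to $1$ via the binomial theorem. For completeness I would also record the boundary case $m=0$ (where $f_0(2,0)=1$ trivially and $(1)\oplus(0)=(1)$) and, if desired, note that this proposition serves as the base case of the recurrence for $g_m(\bk;t)$ developed in the rest of the section.
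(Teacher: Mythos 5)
Your proposal is correct and follows essentially the same route as the paper: specialize the definition to $r=1$ to get $g_m((1);t)=f_0(2,m)\left((1)\oplus(m)\right)$ and then observe $f_0(2,m)=\sum_{j=0}^m\binom{m}{j}t^j(1-t)^{m-j}=1$ by the binomial theorem (the paper records this evaluation of $f_0(2,e)$ slightly later, in the proof of Proposition \ref{prop:g_rightarrow}). Your self-corrected reading of the binomial argument is the right one, so there is no gap.
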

\begin{proof}
It follows from the following calculation.
\begin{align*}
g_m((1);t) & = f_0(2,m)\left( (1)\oplus(m) \right) = (1)\oplus(m).\qedhere
\end{align*}
\end{proof}
\begin{prop}\label{prop:g_uparrow}
For a non-empty index $\bk$ and a positive integer $m$, we have
\begin{align*}
 g_m(\bk_{\ua};t)=g_m(\bk; t)_{\ua}+tg_{m-1}(\bk_{\ua};t)_{\ua}.
\end{align*}
\end{prop}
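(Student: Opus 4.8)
The plan is to prove the identity by unravelling the definition of $g_m$ on both sides and matching terms according to how the top-level block structure of the index interacts with the position $r$ (the last coordinate of $\bk$). Concretely, recall that
\[
g_m(\bl;t)=\sum_{l=1}^{\dep(\bl)}(-t(1-t))^{\dep(\bl)-l}\sum_{\substack{\be\in\bZ^l_{\ge0}\\ |\be|=m}}\sum_{1=i_1<\cdots<i_{l+1}=\dep(\bl)+1}\Bigl(\prod_{l'=1}^l f_{i_{l'+1}-i_{l'}-1}(k'_{i_{l'}}+\cdots+k'_{i_{l'+1}-1},e_{l'})\Bigr)\times\bigl(\text{(merged index)}\oplus\be\bigr).
\]
Apply this with $\bl=\bk_\ua$, which has the same depth $r$ and the same primed-partial-sums as $\bk$ except that the very last entry is increased by one: $(\bk_\ua)'_j=k'_j$ for $j<r$ and $(\bk_\ua)'_r=k'_r+1=k_r+1$ (the $\delta_{j,1}$ shift is unaffected since $r\ge 1$; if $r=1$ one should treat that boundary case separately, but Proposition~\ref{prop:g_noarrow} already handled $\bk=(1)$ and for $\bk=(k)$ with $k\ge 1$ the computation is the depth-one identity). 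For every term of $g_m(\bk_\ua;t)$, look at the last block $i_l\le j\le r$. The only data that change compared with the corresponding term of $g_m(\bk;t)$ are: the value of the $f$-factor attached to the last block, which becomes $f_{r-i_l}(K+1,e_l)$ instead of $f_{r-i_l}(K,e_l)$ where $K:=k'_{i_l}+\cdots+k'_r$; and the last merged entry, which is increased by one (before adding $e_l$). So the task reduces to a purely combinatorial identity expressing $f_i(k+1,e)$ in terms of $f_i(k,e)$ and $f_i(k+1,e-1)$.

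The key lemma I would isolate and prove first is the recurrence
\[
f_i(k+1,e)=f_i(k,e)\bigl|_{\text{applied after raising the last entry}}+t\,f_i(k+1,e-1),
\]
made precise at the level of the polynomial coefficients. Writing out $f_i(k,e)=\sum_{j=0}^e\binom{e-j}{i}\binom{k+e-i-2}{j}t^j(1-t)^{e-i-j}$, the replacement $k\mapsto k+1$ changes $\binom{k+e-i-2}{j}$ to $\binom{k+e-i-1}{j}$, and Pascal's rule $\binom{k+e-i-1}{j}=\binom{k+e-i-2}{j}+\binom{k+e-i-2}{j-1}$ splits $f_i(k+1,e)$ into two sums; the first is $f_i(k,e)$ and the second, after shifting $j\mapsto j+1$ and recognizing $e-(j+1)=(e-1)-j$ in the remaining binomials and exponents, is exactly $t\,f_i(k-? ,e-1)$ — one must check the bookkeeping shows it is $t\,f_i(k+1,e-1)$, using that the argument of $f$ in $f_i(k+1,e-1)$ is $k+1$ so $\binom{(k+1)+(e-1)-i-2}{j}=\binom{k+e-i-2}{j}$, which matches. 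This identity, summed over the extra unit distributed to the last merged entry (i.e., $e_l\to e_l-1$ in the term produced by the $t\,f_i(k+1,e-1)$ piece), is what turns the second sum into $t\,g_{m-1}(\bk_\ua;t)_\ua$: the index $\bk_\ua$ reappears with the last merged entry carrying $e_l-1$ instead of $e_l$, so after the global $\ua$ on merged indices it is $t$ times a term of $g_{m-1}(\bk_\ua;t)_\ua$, with total weight now $m-1$ and the sign $(-t(1-t))^{r-l}$ and all other $f$-factors untouched.

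After establishing the lemma, the proof is a reindexing: fix $l$, fix the cut points $1=i_1<\cdots<i_{l+1}=r+1$, fix $(e_1,\dots,e_l)$ with $|\be|=m$; substitute the split of $f_{r-i_l}(K+1,e_l)$; in the "$f_i(k,e)$" half, the term is literally the corresponding term of $g_m(\bk;t)$ after applying $\ua$ to the merged tuple (since only the last merged entry was raised, and $(\cdot)_\ua$ raises exactly that entry), so summing gives $g_m(\bk;t)_\ua$; in the "$t f_i(k+1,e-1)$" half, relabel $e_l':=e_l-1$ so that $|\be'|=m-1$, and observe the resulting term is $t$ times the $(l,\{i_\bullet\},\be')$-term of $g_{m-1}(\bk_\ua;t)$ followed by $(\cdot)_\ua$ — note the partial sum argument there is indeed $(\bk_\ua)'_{i_l}+\cdots+(\bk_\ua)'_r=K+1$, consistent. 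The hypothesis $m\ge 1$ is exactly what is needed so that $e_l\ge 1$ can occur and $g_{m-1}$ is nontrivial (for $m=0$ the second term vanishes by $g_{-1}=0$, and one checks $f_i(k+1,0)=f_i(k,0)$, recovering just $g_0(\bk_\ua;t)=g_0(\bk;t)_\ua$, which is the degenerate case). The only genuine obstacle is the boundary/edge handling: verifying the Pascal-split lemma near $j=0$ and $j=e$ where the shifted binomials $\binom{k+e-i-2}{-1}$ and $\binom{\cdot}{e}$ could misbehave, and confirming that the $\delta_{j,1}$ modification in $k'$ does not interfere (it does not, since raising the last entry never touches position $1$ when $r\ge 2$, and $r=1$ is the already-handled depth-one case). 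Everything else is mechanical reindexing, so I would present the lemma with full detail and then state the reindexing concisely.
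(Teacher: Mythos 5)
Your proposal is correct and follows essentially the same route as the paper: the identity $f_i(k+1,e)=f_i(k,e)+t\,f_i(k+1,e-1)$ obtained from Pascal's rule (with the $e=0$ boundary checked separately) is exactly the paper's key step, and the subsequent term-by-term matching of the last-block factor turning the difference $g_m(\bk_{\ua};t)-g_m(\bk;t)_{\ua}$ into $t\,g_{m-1}(\bk_{\ua};t)_{\ua}$ is the same reindexing the paper performs. Your extra caution about the $r=1$ case is unnecessary (the $\delta_{j,1}$ shift enters both sides identically, so the argument is uniform) but harmless.
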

\begin{proof}
For $k \ge1, i\ge0$, and $e\ge1$, we have
\begin{align*}
f_i(k+1,e)
&=\sum_{j=0}^e \binom{e-j}{i} \binom{k+e-i-1}{j} t^j(1-t)^{e-i-j} \\
&=\sum_{j=0}^e \binom{e-j}{i}
\biggl\{ \binom{k+e-i-2}{j} +\binom{k+e-i-2}{j-1} \biggr\}t^j(1-t)^{e-i-j} \\
&=f_i(k,e)+tf_i(k+1,e-1).
\end{align*}
This equality also holds for $e=0$ since $f_i(k+1, 0)=f_i(k, 0)=\delta_{i,0}$ and $f_i(k+1, -1)=0$.
Thus, we obtain
\begin{align*}
&g_m(\bk_{\ua}; t)-g_m(\bk; t)_{\ua} \\
&=\sum_{l=1}^{r}(-t(1-t))^{r-l}
\sum_{\substack{e_{1}+\cdots+e_{l}=m\\ e_{1},\dots,e_{l} \ge 0}}
\sum_{1=i_{1}<\cdots<i_{l+1}=r+1}
\prod_{l'=1}^{l-1} f_{i_{l'+1}-i_{l'}-1}(k_{i_{l'}}'+\cdots+k'_{i_{l'+1}-1},e_{l'}) \\
&\qquad \times\prod_{l'=l}^{l}tf_{i_{l'+1}-i_{l'}-1}(k_{i_{l'}}'+\cdots+k'_{i_{l'+1}-1}+1,e_{l'}-1) \\
&\qquad \times\left((k_{i_{1}}+\cdots+k_{i_{2}-1},\dots,k_{i_{l}}+\cdots+k_{i_{l+1}-1})\oplus\be\right) \\
&=t\sum_{l=1}^{r}(-t(1-t))^{r-l}
\sum_{\substack{e_{1}+\cdots+e_{l}=m-1\\ e_{1},\dots,e_{l} \ge 0}}
\sum_{1=i_{1}<\cdots<i_{l+1}=r+1}
\prod_{l'=1}^{l-1} f_{i_{l'+1}-i_{l'}-1}(k_{i_{l'}}'+\cdots+k'_{i_{l'+1}-1},e_{l'}) \\
&\qquad \times\prod_{l'=l}^{l} f_{i_{l'+1}-i_{l'}-1}(k_{i_{l'}}'+\cdots+k'_{i_{l'+1}-1}+1,e_{l'}) \\
&\qquad \times\left((k_{i_{1}}+\cdots+k_{i_{2}-1},\dots,k_{i_{l}}+\cdots+k_{i_{l+1}-1})\oplus\be\right) \\
&=tg_{m-1}(\bk_{\ua}; t)_{\ua},
\end{align*}
which completes the proof.
\end{proof}
\begin{lem}\label{lem:recurrence_f}
For a positive integer $k$ and non-negative integers $e$ and $i$, we have
\begin{equation*}
f_{i+1}(k+1,e)-(1-t)f_{i+1}(k+2,e)+f_{i}(k,e)-f_{i}(k+1,e)=0.
\end{equation*}
\end{lem}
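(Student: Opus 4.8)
The plan is to reduce the four-term identity to the much shorter relation
\[
f_{i+1}(k+1,e)-(1-t)f_{i+1}(k+2,e-1)=f_i(k+1,e-1),
\]
using the recurrence $f_i(k+1,e)=f_i(k,e)+tf_i(k+1,e-1)$ (valid for $k\ge 1$, $i\ge 0$, $e\ge 0$) that was established inside the proof of Proposition~\ref{prop:g_uparrow}, and then to verify this shorter relation by expanding the definition of $f_i$ together with one application of Pascal's rule.

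For the reduction I would apply that recurrence twice. With index $i$ and base point $k$ it gives $f_i(k,e)-f_i(k+1,e)=-tf_i(k+1,e-1)$; with index $i+1$ and base point $k+1$ it gives $f_{i+1}(k+2,e)=f_{i+1}(k+1,e)+tf_{i+1}(k+2,e-1)$. Substituting both into the left-hand side of the lemma, the terms $f_{i+1}(k+1,e)$ combine into $t\,f_{i+1}(k+1,e)$, and after factoring out the common $t$ the vanishing of the left-hand side becomes equivalent to the displayed shorter relation (the degenerate case $e=0$ being immediate, since then $f_{i+1}(k+1,0)=\delta_{i+1,0}=0$ and $f_{j}(k,-1)=0$).

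To finish, I would expand both sides of the shorter relation using $f_i(k,e)=\sum_{j}\binom{e-j}{i}\binom{k+e-i-2}{j}t^{j}(1-t)^{e-i-j}$. On the left the coefficient of $\binom{k+e-i-2}{j}t^{j}(1-t)^{e-i-1-j}$ works out to $\binom{e-j}{i+1}-\binom{e-1-j}{i+1}$, which by Pascal's rule is $\binom{e-1-j}{i}$; since $e-i-1-j=e-1-i-j$, this matches the $j$-th term of $f_i(k+1,e-1)$ on the right, so the two sides agree term by term (the extra $j=e$ summand on the left vanishes because $\binom{0}{i+1}=0$). There is no real obstacle here beyond careful bookkeeping of the shifts in $i$, $k$, and $e$; the only mildly delicate point is keeping the exponents of $(1-t)$ straight after multiplying $f_{i+1}(k+2,e-1)$ by $(1-t)$, so I would write those exponents out explicitly rather than leave them implicit.
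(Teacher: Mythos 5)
Your proof is correct, and it is organized differently from the paper's, though it rests on the same two elementary ingredients. The paper keeps $e$ fixed throughout: it uses Pascal's rule in the $i$-direction to rewrite $(1-t)f_{i+1}(k+1,e)+f_i(k,e)$ (and its $k$-shifted analogue) as a single sum, takes the difference, applies Pascal's rule in the $k$-direction, and after a shift of the summation index recognizes the result as $-t\,f_{i+1}(k+1,e)$, which rearranges to the lemma. You instead invoke the recurrence $f_i(k+1,e)=f_i(k,e)+t f_i(k+1,e-1)$ already established in the proof of Proposition~\ref{prop:g_uparrow} (once with $(i,k)$ and once with $(i+1,k+1)$), which is exactly the Pascal-in-$k$ step packaged as a reusable identity; this lets you factor out $t$ and reduce the four-term statement to the cleaner three-term identity $f_{i+1}(k+1,e)=(1-t)f_{i+1}(k+2,e-1)+f_i(k+1,e-1)$, verified by one Pascal-in-$i$ step exactly as you describe (the $j=e$ term vanishes since $\binom{0}{i+1}=0$, and the exponents of $(1-t)$ match because $e-(i+1)-j=(e-1)-i-j$). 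Your bookkeeping of the degenerate case is also right: the conventions $f_i(k,-1)=0$ and $f_{i+1}(k,0)=\delta_{i+1,0}=0$ make both the reduction and the short identity hold at $e=0$, and since $\bQ[t]$ is an integral domain the factored $t$ causes no loss. What your route buys is a tidy intermediate identity and reuse of earlier work; the paper's version is self-contained within the lemma and avoids the shift $e\mapsto e-1$. Either write-up is acceptable.
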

\begin{proof}
Note that
\begin{align*}
f_{i+1}(k+1,e) & =\sum_{j=0}^{e}{e-j \choose i+1}{k+e-i-2 \choose j}t^{j}(1-t)^{e-i-j-1},\\
f_{i+1}(k+2,e) & =\sum_{j=0}^{e}{e-j \choose i+1}{k+e-i-1 \choose j}t^{j}(1-t)^{e-i-j-1},\\
f_{i}(k,e) & =\sum_{j=0}^{e}{e-j \choose i}{k+e-i-2 \choose j}t^{j}(1-t)^{e-i-j},\\
f_{i}(k+1,e) & =\sum_{j=0}^{e}{e-j \choose i}{k+e-i-1 \choose j}t^{j}(1-t)^{e-i-j}.
\end{align*}
Then, we have
\begin{align*}
f_{i}(k,e) & =\sum_{j=0}^{e}{e-j \choose i}{k+e-i-2 \choose j}t^{j}(1-t)^{e-i-j}\\
 & =\sum_{j=0}^{e}{e-j+1 \choose i+1}{k+e-i-2 \choose j}t^{j}(1-t)^{e-i-j}\\
 & \quad -\sum_{j=0}^{e}{e-j \choose i+1}{k+e-i-2 \choose j}t^{j}(1-t)^{e-i-j}\\
 & =\sum_{j=0}^{e}{e-j+1 \choose i+1}{k+e-i-2 \choose j}t^{j}(1-t)^{e-i-j}
 -(1-t)f_{i+1}(k+1,e).
\end{align*}
Therefore, we obtain
\begin{align*}
(1-t)f_{i+1}(k+1, e)+f_i(k,e)
=\sum_{j=0}^{e}\binom{e-j+1}{i+1}\binom{k+e-i-2}{j}t^{j}(1-t)^{e-i-j}.
\end{align*}
Putting $k+1$ into $k$ in the above equality, we also have
\begin{multline*}
(1-t)f_{i+1}(k+2,e)+f_i(k+1,e)
=\sum_{j=0}^{e}\binom{e-j+1}{i+1}\binom{k+e-i-1}{j}t^{j}(1-t)^{e-i-j}.
\end{multline*}
Thus, we obtain
\begin{align*}
& (1-t)f_{i+1}(k+1, e)+f_i(k,e) - (1-t)f_{i+1}(k+2,e) - f_i(k+1,e)\\
& = \sum_{j=0}^{e}\binom{e-j+1}{i+1} \left( \binom{k+e-i-2}{j}  - \binom{k+e-i-1}{j} \right) t^{j}(1-t)^{e-i-j}\\
& = -\sum_{j=0}^{e}\binom{e-j+1}{i+1}  \binom{k+e-i-2}{j-1} t^{j}(1-t)^{e-i-j}\\
& \overset{j'=j-1}{=} -\sum_{j'=-1}^{e-1}\binom{e-j'}{i+1}  \binom{k+e-i-2}{j'} t^{j'+1}(1-t)^{e-i-j'-1}\\
& = -tf_{i+1}(k+1,e).
\end{align*}
This completes the proof.
\end{proof}
\begin{prop}\label{prop:g_rightarrow}
For a non-empty index $\bk$ and a non-negative integer $m$, we have
\begin{align*}
g_m(\bk_{\ra};t)=(1-t)g_m(\bk;t)_{\ua}+g_m(\bk;t)_{\ra}-(1-t)g_m(\bk_{\ua};t)+(1-t)g_{m-1}(\bk_{\ra\ua}; t).
\end{align*}
\end{prop}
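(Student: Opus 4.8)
The plan is to unwind the definition of $g_m(\bk_{\ra};t)$ directly and sort the resulting double sum according to whether the last ``block'' of the partition $1=i_1<\cdots<i_{l+1}=r+2$ (now for the index $\bk_{\ra}$ of depth $r+1$) is a singleton $\{r+1\}$ or not. Write $\bk=(k_1,\dots,k_r)$, so $\bk_{\ra}=(k_1,\dots,k_r,1)$ and note $k'_{r+1}=1$ since $\delta_{r+1,1}=0$. In the defining sum for $g_m(\bk_{\ra};t)$ the outermost index $l$ runs from $1$ to $r+1$; the term $l=r+1$ forces all blocks to be singletons, and this is exactly $g_m(\bk;t)_{\ra}$ after accounting for the extra $(-t(1-t))^0$ and the single factor $f_0(1,e_{r+1})$ in the last slot — but more cleanly, I would instead split on the position $i_l$ of the last block-start. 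The two cases are: (a) $i_l=r+1$, so the last block is the singleton $\{r+1\}$ contributing a factor $f_0(k'_{r+1},e_l)=f_0(1,e_l)$ and a new last entry $1+e_l$; (b) $i_l\le r$, so $r+1$ lies inside the last block $\{i_l,\dots,r+1\}$, contributing $f_{r+1-i_l}\bigl(k'_{i_l}+\cdots+k'_r+1,\,e_l\bigr)$ and a last entry $(k_{i_l}+\cdots+k_r+1)+e_l$ while picking up one extra power of $-t(1-t)$ relative to the corresponding block of $\bk$.

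Next I would recognize each of the four terms on the right-hand side. The combination $(1-t)g_m(\bk;t)_{\ua}+g_m(\bk;t)_{\ra}$ should match the ``case (a)'' contribution with $f_0(1,e_l)=\delta_{e_l,0}(1-t)^{?}$ — here I would use the elementary identities $f_0(1,e)=(1-t)^{e}$ and more generally expand $f_0(1,e_l)$ via the binomial theorem as $\sum_{e_l'} \binom{e_l}{e_l'} t^{e_l'}(1-t)^{e_l-e_l'}\cdot[\text{shift}]$, which redistributes the last variable $e_l$ between an ``$\ua$'' part (weight poured onto the $r$th entry of $\bk$) and a ``$\ra$'' part (a genuine new entry $1$); the $t$-versus-$(1-t)$ split is precisely the source of the coefficient $(1-t)$ in front of $g_m(\bk;t)_{\ua}$. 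Meanwhile the ``case (b)'' contribution, which looks like $g_m(\bk_{\ua};t)$ but with each last-block factor $f_j(\cdots)$ replaced by $f_{j+1}(\cdots+1)$ and an extra $-t(1-t)$, is handled by Lemma~\ref{lem:recurrence_f}: the identity
\[
f_{i+1}(k+1,e)-(1-t)f_{i+1}(k+2,e)+f_i(k,e)-f_i(k+1,e)=0
\]
lets me rewrite $-t(1-t)f_{i+1}(k'_{i_l}+\cdots+k'_r+1,e_l)$ in terms of $f_{i}$-type factors attached to $\bk$ and to $\bk_{\ua}$, producing exactly $-(1-t)g_m(\bk_{\ua};t)$ together with the correction $(1-t)g_{m-1}(\bk_{\ra\ua};t)$ (the shift $m\mapsto m-1$ coming from the residual factor of $t$ in the lemma, i.e.\ the ``$-tf_{i+1}(k+1,e)$'' on the right of the lemma, which lowers one $e_{l'}$ by $1$). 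I would also invoke Proposition~\ref{prop:g_uparrow} to reconcile $g_m(\bk_{\ua};t)$ with $g_m(\bk;t)_{\ua}+tg_{m-1}(\bk_{\ua};t)_{\ua}$ wherever the bookkeeping of the upper arrows needs it, and Proposition~\ref{prop:g_noarrow} for the base of any induction.

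The main obstacle I anticipate is purely combinatorial rather than conceptual: matching the block-decomposition sums term by term requires a careful change of summation variables (splitting the last weight $e_l$, reindexing $i_l$, and tracking which powers of $-t(1-t)$ and which $t$'s get absorbed where), and a single sign or off-by-one in the range of $i$ in Lemma~\ref{lem:recurrence_f} will derail the identification of $g_{m-1}(\bk_{\ra\ua};t)$. So I would organize the proof as: (1) expand $g_m(\bk_{\ra};t)$ and split off the last block; (2) in case (a), apply the binomial expansion of $f_0(1,e_l)$ to produce $(1-t)g_m(\bk;t)_{\ua}+g_m(\bk;t)_{\ra}$; (3) in case (b), apply Lemma~\ref{lem:recurrence_f} to the last-block factor to produce $-(1-t)g_m(\bk_{\ua};t)+(1-t)g_{m-1}(\bk_{\ra\ua};t)$; (4) sum the contributions. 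The edge cases $m=0$ (where $g_{-1}=0$ by convention) and $r=1$ (where $\bk_{\ra\ua}=(k_1,2)$, checked against \eqref{eq:dep2}) should be verified separately as sanity checks, though they will follow from the same manipulation with the stated conventions $f_i(k,-1)=0$ and $g_{-1}(\bk;t)=0$.
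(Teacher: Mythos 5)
Your skeleton (expand $g_m(\bk_{\ra};t)$, split according to whether the appended entry forms a singleton last block, apply Lemma~\ref{lem:recurrence_f} to the non-singleton part) is the same second half as the paper's argument, but the term-by-term identifications you assert in steps (2) and (3) are false, and what is missing is precisely the paper's first move: apply $\ua$ to the claimed identity and use Proposition~\ref{prop:g_uparrow} (for the index $\bk_{\ra}$) to replace $(1-t)g_{m-1}(\bk_{\ra\ua};t)_{\ua}$ by $\tfrac{1-t}{t}\bigl(g_m(\bk_{\ra\ua};t)-g_m(\bk_{\ra};t)_{\ua}\bigr)$, so that both sides are compared at the same $m$ and the last entries line up before the splitting is done.

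Concretely: $f_0(1,e)$ equals $1$ for $e=0$ and $1-t$ for every $e\ge 1$ (not $(1-t)^e$), and your case (a) produces, besides $g_m(\bk;t)_{\ra}$ (the $e_{l+1}=0$ terms), only indices with a \emph{new} final component $1+e_{l+1}$ of depth $l+1$; no ``redistribution'' of weight between components is available in $\cI^t$, so these can never assemble into $(1-t)g_m(\bk;t)_{\ua}$ — already for $\bk=(k)$ your claimed case-(a) output contains the depth-one term $(1-t)f_0(k+1,m)\,(k+m+1)$, while every index occurring in case (a) has depth at least two. What case (a) actually gives is $g_m(\bk;t)_{\ra}$ plus $(1-t)$ times the singleton-last-block part of $g_{m-1}(\bk_{\ra\ua};t)$. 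In case (b), Lemma~\ref{lem:recurrence_f} applied to the factor $f_{i+1}(k+1,e_l)$ yields $t(1-t)g_m(\bk;t)_{\ua}-t(1-t)g_m(\bk_{\ua};t)$ (note the extra $t$ coming from the prefactor $(-t(1-t))^{r-l+1}$, not the coefficients $(1-t)$ you want), together with a leftover $(1-t)\sum(\cdots)f_{i+1}(k+2,e_l)$ attached to indices with last entry $\wt(\bk_l)+1+e_l$; after matching indices, the corresponding part of $g_{m-1}(\bk_{\ra\ua};t)$ carries $f_{i+1}(k+2,e_l-1)$ instead, and a factor of $t$ does not ``lower $e$ by one.'' Closing both discrepancies requires the additional recurrence $f_i(k+1,e)=f_i(k,e)+tf_i(k+1,e-1)$ (the identity established inside the proof of Proposition~\ref{prop:g_uparrow}), or equivalently the reduction described above, after which one can solve for the case-(b) sum and recover the coefficients $(1-t)$. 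As written, your steps (2) and (3) assert equalities that are simply not true, so the proof does not go through without this extra ingredient.
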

\begin{proof}
Note that since
\begin{align*}
(1-t)g_{m-1}(\bk_{\ra\ua};t)_{\ua}
=\frac{1-t}{t}g_m(\bk_{\ra\ua}; t)-\frac{1-t}{t}g_m(\bk_{\ra}; t)_{\ua},
\end{align*}
the claim is equivalent to
\begin{align*}
g_m(\bk_{\ra};t)_{\ua}
&=(1-t)g_m(\bk;t)_{\ua\ua}+g_m(\bk;t)_{\ra\ua}-(1-t)g_m(\bk_{\ua};t)_{\ua}+(1-t)g_{m-1}(\bk_{\ra\ua}; t)_{\ua}\\
&=(1-t)g_m(\bk;t)_{\ua\ua}+g_m(\bk;t)_{\ra\ua}\\
&\qquad -(1-t)g_m(\bk_{\ua};t)_{\ua}+\frac{1-t}{t}g_m(\bk_{\ra\ua};t)-\frac{1-t}{t}g_m(\bk_{\ra};t)_{\ua},
\end{align*}
and this is equivalent to
\begin{equation}\label{eq:key_of_g_ra}
g_m(\bk_{\ra};t)_{\ua}=t(1-t)g_m(\bk;t)_{\ua\ua}+tg_m(\bk;t)_{\ra\ua}-t(1-t)g_m(\bk_{\ua};t)_{\ua}+(1-t)g_m(\bk_{\ra\ua};t).
\end{equation}
Since $g_m(\bk;t)$ also can be written as
\begin{align*}
g_m(\bk;t)
&=\sum_{l=1}^{{\rm dep}(\bk)}(-t(1-t))^{{\rm dep}(\bk)-l}
\sum_{\substack{e_{1}+\cdots+e_{l}=m \\ e_{1},\dots,e_{l}\geq0}}
\sum_{\substack{\bk=(\bk_{1},\dots,\bk_{l}) \\ {\rm dep}(\bk_{i})>0}
}\prod_{l'=1}^{l}f_{{\rm dep}(\bk_{l'})-1}({\rm wt}(\bk_{l'})+\delta_{l',1},e_{l'})\\
 & \ \ \times\left({\rm wt}(\bk_{1})+e_{1},\dots,{\rm wt}(\bk_{l})+e_{l}\right),
\end{align*}
we have
\begin{align} \label{eq:S_1+S_2}
g_m(\bk_{\ra};t)=S_{1}+S_{2},
\end{align}
where
\begin{align*}
S_{1} & \coloneqq\sum_{l=1}^{{\rm dep}(\bk)}(-t(1-t))^{{\rm dep}(\bk)-l}\sum_{\substack{e_{1}+\cdots+e_{l+1}=m\\
e_{1},\dots,e_{l+1}\geq0
}
}\sum_{\substack{\bk=(\bk_{1},\dots,\bk_{l})\\
{\rm dep}(\bk_{i})>0
}
}\prod_{l'=1}^{l}f_{{\rm dep}(\bk_{l'})-1}({\rm wt}(\bk_{l'})+\delta_{l',1},e_{l'})\\
 & \ \ \times f_{0}(1,e_{l+1})\times\left({\rm wt}(\bk_{1})+e_{1},\dots,{\rm wt}(\bk_{l})+e_{l},1+e_{l+1}\right)
\end{align*}
and
\begin{align*}
S_{2}
& \coloneqq\sum_{l=1}^{{\rm dep}(\bk)}(-t(1-t))^{{\rm dep}(\bk)-l+1}\\
&\ \ \times\sum_{\substack{e_{1}+\cdots+e_{l}=m\\
e_{1},\dots,e_{l}\geq0
}
}\sum_{\substack{\bk=(\bk_{1},\dots,\bk_{l})\\
{\rm dep}(\bk_{i})>0
}
}\prod_{l'=1}^{l}f_{{\rm dep}(\bk_{l'})+\delta_{l',l}-1}({\rm wt}(\bk_{l'})+\delta_{l',1}+\delta_{l',l},e_{l'})\\
 & \ \ \times\left({\rm wt}(\bk_{1})+e_{1},\dots,{\rm wt}(\bk_{l})+e_{l}\right)_{\ua}.
\end{align*}
Similarly, 
we have
\begin{align} \label{eq:S_3+S_4}
g_m(\bk_{\ra\ua};t)=S_{3}+S_{4},
\end{align}
where
\begin{align*}
S_{3}
& \coloneqq\sum_{l=1}^{{\rm dep}(\bk)}(-t(1-t))^{{\rm dep}(\bk)-l}
\sum_{\substack{e_{1}+\cdots+e_{l+1}=m \\ e_{1},\dots,e_{l+1}\geq0}}
\sum_{\substack{\bk=(\bk_{1},\dots,\bk_{l}) \\ {\rm dep}(\bk_{i})>0}}
\prod_{l'=1}^{l}f_{{\rm dep}(\bk_{l'})-1}({\rm wt}(\bk_{l'})+\delta_{l',1},e_{l'})\\
& \ \ \times f_{0}(2,e_{l+1})\times\left({\rm wt}(\bk_{1})+e_{1},\dots,{\rm wt}(\bk_{l})+e_{l},2+e_{l+1}\right)
\end{align*}
and
\begin{align*}
S_{4}
& \coloneqq\sum_{l=1}^{{\rm dep}(\bk)}(-t(1-t))^{{\rm dep}(\bk)-l+1}\\
& \ \ \times \sum_{\substack{e_{1}+\cdots+e_{l}=m \\ e_{1},\dots,e_{l}\geq0}}
\sum_{\substack{\bk=(\bk_{1},\dots,\bk_{l}) \\ {\rm dep}(\bk_{i})>0}}
\prod_{l'=1}^{l}f_{{\rm dep}(\bk_{l'})+\delta_{l',l}-1}({\rm wt}(\bk_{l'})+\delta_{l',1}+2\delta_{l',l},e_{l'})\\
& \ \ \times\left({\rm wt}(\bk_{1})+e_{1},\dots,{\rm wt}(\bk_{l})+e_{l}\right)_{\ua\ua}.
\end{align*}
Since
\begin{align*}
f_{0}(1,e) & =\sum_{j=0}^{e}{e-1 \choose j}t^{j}(1-t)^{e-j}
=\begin{cases}
1 & \text{if $e=0$,}\\
1-t & \text{if $e>0$}
\end{cases}
\end{align*}
and
\begin{equation*}
f_{0}(2,e)=\sum_{j=0}^{e}{e \choose j}t^{j}(1-t)^{e-j}=1,
\end{equation*}
we have
\begin{align} \label{eq:S_1-(1-t)S_3}
&(S_{1})_{\ua}-(1-t)S_{3}\\
&=\sum_{l=1}^{{\rm dep}(\bk)}(-t(1-t))^{{\rm dep}(\bk)-l}
\sum_{\substack{e_{1}+\cdots+e_{l+1}=m \\ e_{1},\dots,e_{l+1}\geq0}}
\sum_{\substack{\bk=(\bk_{1},\dots,\bk_{l}) \\ {\rm dep}(\bk_{i})>0}}
\prod_{l'=1}^{l}f_{{\rm dep}(\bk_{l'})-1}({\rm wt}(\bk_{l'})+\delta_{l',1},e_{l'}) \nonumber \\
&\quad \times\left(f_{0}(1,e_{l+1})-(1-t)f_{0}(2,e_{l+1})\right)
\times\left({\rm wt}(\bk_{1})+e_{1},\dots,{\rm wt}(\bk_{l})+e_{l},2+e_{l+1}\right) \nonumber \\
 &=\sum_{l=1}^{{\rm dep}(\bk)}(-t(1-t))^{{\rm dep}(\bk)-l}
 \sum_{\substack{e_{1}+\cdots+e_{l+1}=m \\ e_{1},\dots,e_{l+1}\geq0}}
 \sum_{\substack{\bk=(\bk_{1},\dots,\bk_{l}) \\ {\rm dep}(\bk_{i})>0}}
 \prod_{l'=1}^{l}f_{{\rm dep}(\bk_{l'})-1}({\rm wt}(\bk_{l'})+\delta_{l',1},e_{l'}) \nonumber \\
 &\quad  \times t\delta_{e_{l+1},0}
 \times\left({\rm wt}(\bk_{1})+e_{1},\dots,{\rm wt}(\bk_{l})+e_{l},2+e_{l+1}\right) \nonumber \\
 &=t\sum_{l=1}^{{\rm dep}(\bk)}(-t(1-t))^{{\rm dep}(\bk)-l}
 \sum_{\substack{e_{1}+\cdots+e_{l}=m \\ e_{1},\dots,e_{l}\geq0}}
 \sum_{\substack{\bk=(\bk_{1},\dots,\bk_{l}) \\ {\rm dep}(\bk_{i})>0}}
 \prod_{l'=1}^{l}f_{{\rm dep}(\bk_{l'})-1}({\rm wt}(\bk_{l'})+\delta_{l',1},e_{l'}) \nonumber \\
 & \quad \times\left({\rm wt}(\bk_{1})+e_{1},\dots,{\rm wt}(\bk_{l})+e_{l},2\right) \nonumber \\
 & =tg_m(\bk;t)_{\ra\ua}. \nonumber
\end{align}
Moreover, if we set $i \coloneqq \dep(\bk_l)-1, k \coloneqq \wt(\bk_l)+\delta_{l,1},$ and $e \coloneqq e_l$, from Lemma \ref{lem:recurrence_f}, we have
\begin{align*}
&(S_{2})_{\ua}-(1-t)S_{4}-t(1-t)g_m(\bk;t)_{\ua\ua}+t(1-t)g_m(\bk_{\ua};t)_{\ua}\\
&=\sum_{l=1}^{{\rm dep}(\bk)}(-t(1-t))^{{\rm dep}(\bk)-l+1}
\sum_{\substack{e_{1}+\cdots+e_{l}=m \\ e_{1},\dots,e_{l}\geq0}}
\sum_{\substack{\bk=(\bk_{1},\dots,\bk_{l}) \\ {\rm dep}(\bk_{i})>0}}
\prod_{l'=1}^{l-1}f_{{\rm dep}(\bk_{l'})-1}({\rm wt}(\bk_{l'})+\delta_{l',1},e_{l'})\\
 & \quad \times\bigl(f_{i+1}(k+1,e)-(1-t)f_{i+1}(k+2,e)+f_{i}(k,e)-f_{i}(k+1,e)\bigr)\\
 & \quad \times\left({\rm wt}(\bk_{1})+e_{1},\dots,{\rm wt}(\bk_{l})+e_{l}\right)_{\ua\ua}\\
 &=0.
\end{align*}
That is, we have
\begin{equation} \label{eq:S_2-(1-t)S_4}
(S_{2})_{\ua}-(1-t)S_{4}=tg_m(\bk;t)_{\ua\ua}-t(1-t)g_m(\bk_{\ua};t)_{\ua}.
\end{equation}
From \eqref{eq:S_1+S_2}, \eqref{eq:S_3+S_4}, \eqref{eq:S_1-(1-t)S_3}, and \eqref{eq:S_2-(1-t)S_4}, we completes the proof.
\end{proof}
Recall $G_m(\bk;t) \coloneqq I^t(g_m(\bk;t))$. From Propositions~\ref{prop:g_noarrow}, \ref{prop:g_uparrow} and \ref{prop:g_rightarrow}, we obtain the recurrence relations of $G_m(\bk;t)$.
\begin{prop} \label{prop:recurrence_G}
For any non-empty index $\bk$ and any non-negative integer $m$, we have
\begin{align*}
G_m((1);t)
&=(1)\oplus(m),\\
G_m(\bk_{\ua};t)
&=G_m(\bk;t)_{\ua}+tG_{m-1}(\bk_{\ua};t)_{\ua}, \\
G_m(\bk_{\ra};t)
 & =G_m(\bk;t)_{\ua}+G_m(\bk;t)_{\ra}-(1-t)G_m(\bk_{\ua};t)+(1-t)G_{m-1}(\bk_{\ra\ua};t).
\end{align*}
\end{prop}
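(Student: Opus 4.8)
The plan is to deduce the three recurrence relations for $G_m(\bk;t)=I^t(g_m(\bk;t))$ from the corresponding relations for $g_m(\bk;t)$ established in Propositions~\ref{prop:g_noarrow}, \ref{prop:g_uparrow}, and \ref{prop:g_rightarrow}, by exploiting the compatibility of the operator $I^t$ with the arrow operations $\ua$ and $\ra$. Since $I^t$ is $\bQ[t]$-linear, applying it to the identities in those propositions immediately reduces everything to two claims: first, $I^t(w_\ua)=I^t(w)_\ua$ for any $w\in\cI^t$ supported on non-empty indices, and second, $I^t(w_\ra)=I^t(w)_\ua+I^t(w)_\ra$ for such $w$. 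By $\bQ[t]$-linearity it suffices to verify both identities when $w$ is a single non-empty index $\bk=(k_1,\dots,k_r)$.

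For the first claim, observe that $(k_1,\dots,k_r)_\ua=(k_1,\dots,k_{r-1},k_r+1)$ only changes the last entry. In the defining sum for $I^t(k_1,\dots,k_r)$ one chooses, for each of the $r-1$ gaps, either a comma or a plus; incrementing $k_r$ by one commutes with every such choice, because the last entry $k_r$ is involved in exactly one gap (the one between $k_{r-1}$ and $k_r$) and in each resulting term $k_r$ survives as the summand sitting at the right end (either as its own entry, or absorbed into the preceding sum via a $+$) — in all cases adding $1$ to the original $k_r$ has the same effect as applying $\ua$ to the resulting index. Hence $I^t((k_1,\dots,k_r)_\ua)=I^t(k_1,\dots,k_r)_\ua$, and by linearity $G_m(\bk;t)_\ua=I^t(g_m(\bk;t))_\ua=I^t(g_m(\bk;t)_\ua)$; the same reasoning applies to $G_{m-1}(\bk_\ua;t)_\ua$.

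For the second claim, note $(k_1,\dots,k_r)_\ra=(k_1,\dots,k_r,1)$ appends a new entry $1$ and thus introduces one extra gap, between the old $k_r$ and the new $1$. Splitting the sum defining $I^t(k_1,\dots,k_r,1)$ according to whether that new gap is filled with a comma or with a plus gives exactly two pieces: the comma-case reproduces $I^t(k_1,\dots,k_r)_\ra$ (a trailing $1$ appended, with weight $t^{\#+}$ over the first $r-1$ gaps), while the plus-case merges the trailing $1$ into the last entry of each term coming from $I^t(k_1,\dots,k_r)$, which is precisely $I^t(k_1,\dots,k_r)_\ua$, and the power of $t$ matches because the extra $+$ contributes one factor of $t$ — wait, more carefully: in the plus-case the factor $t^{\#+}$ includes the $+$ in the new gap, so it equals $t\cdot t^{(\#+\text{ among first }r-1\text{ gaps})}$, and this $t$ is absorbed correctly since $I^t$ on $\cI^t$ already weights by $t^{\#+}$; reconciling this bookkeeping is the one place to be careful, but it works out to $I^t(w)_\ua+I^t(w)_\ra$ with no spurious $t$. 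Granting these two compatibility identities, applying $I^t$ to Proposition~\ref{prop:g_noarrow} gives $G_m((1);t)=I^t((1)\oplus(m))=(1)\oplus(m)$ since $(1)\oplus(m)=(1+m)$ is a single entry; applying $I^t$ to Proposition~\ref{prop:g_uparrow} gives the second displayed recurrence; and applying $I^t$ to Proposition~\ref{prop:g_rightarrow} gives $G_m(\bk_\ra;t)=(1-t)G_m(\bk;t)_\ua+\bigl(G_m(\bk;t)_\ua+G_m(\bk;t)_\ra\bigr)-(1-t)G_m(\bk_\ua;t)+(1-t)G_{m-1}(\bk_{\ra\ua};t)$, and the two $G_m(\bk;t)_\ua$ terms combine as $(1-t)+1=2-t$... — no: $(1-t)G_m(\bk;t)_\ua+G_m(\bk;t)_\ua$ would be $(2-t)$, which is wrong, so in fact the $\ra$-compatibility must be used on $g_m(\bk;t)_\ra$ only, while $g_m(\bk;t)_\ua$ is handled by $\ua$-compatibility, and the coefficient of $G_m(\bk;t)_\ua$ coming out of Proposition~\ref{prop:g_rightarrow} after applying $I^t$ is $(1-t)+1=2-t$ unless the term $g_m(\bk;t)_\ra$ under $I^t$ produces $G_m(\bk;t)_\ua+G_m(\bk;t)_\ra$ and we must instead check that Proposition~\ref{prop:g_rightarrow}'s right-hand side, after $I^t$, genuinely simplifies to the stated form; the resolution is that one does \emph{not} expand $G_m(\bk;t)_\ra$ via the second identity — rather $I^t(g_m(\bk;t)_\ra)=G_m(\bk;t)_\ua+G_m(\bk;t)_\ra$ replaces a single term, and the displayed answer in Proposition~\ref{prop:recurrence_G} has coefficient $1$ (not $1-t$) in front of $G_m(\bk;t)_\ua$ precisely because Proposition~\ref{prop:g_rightarrow} has coefficient $(1-t)$ there, so $(1-t)+$ (the $\ua$ from $I^t(g_m(\bk;t)_\ra)$, which is $+1$) $\ne 1$ — therefore the correct reading is that the $(1-t)g_m(\bk;t)_\ua$ term of Proposition~\ref{prop:g_rightarrow} becomes $(1-t)G_m(\bk;t)_\ua$ and the $g_m(\bk;t)_\ra$ term becomes $G_m(\bk;t)_\ua+G_m(\bk;t)_\ra$, giving total coefficient $(2-t)$ on $G_m(\bk;t)_\ua$; since the stated Proposition~\ref{prop:recurrence_G} instead has coefficient $1$, I expect the actual proof rewrites Proposition~\ref{prop:g_rightarrow} using $g_m(\bk_\ra;t) - (1-t)g_{m-1}(\bk_{\ra\ua};t)_\ua$ relations so that the bookkeeping collapses — and pinning down exactly this cancellation is the main obstacle I anticipate. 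The bulk of the work is therefore the careful combinatorial verification of the two $I^t$-compatibility identities (routine but fiddly), after which the three recurrences for $G_m$ follow by direct substitution and simplification.
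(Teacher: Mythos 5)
Your overall strategy---apply the $\bQ[t]$-linear map $I^t$ to Propositions~\ref{prop:g_noarrow}, \ref{prop:g_uparrow}, \ref{prop:g_rightarrow} and use compatibility of $I^t$ with the arrow operations---is exactly what the paper intends (its proof is the one-line remark preceding the statement). The $\ua$-compatibility $I^t(w_{\ua})=I^t(w)_{\ua}$ is correct as you argue, and with it the first two recurrences follow immediately. The genuine gap is your second compatibility claim. It is false that $I^t(w_{\ra})=I^t(w)_{\ua}+I^t(w)_{\ra}$: when you split the sum defining $I^t(k_1,\dots,k_r,1)$ over the new final gap, the plus-choice increases the number of `$+$' signs by one and therefore carries an extra factor $t$. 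The correct identity is
\begin{align*}
I^t(w_{\ra})=I^t(w)_{\ra}+t\,I^t(w)_{\ua}
\end{align*}
for $w$ supported on non-empty indices. You half-noticed this ("the extra $+$ contributes one factor of $t$") but then talked yourself out of it ("no spurious $t$"), and this wrong sign-off is precisely what produces the contradiction you then cannot resolve at the end of your write-up.

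With the corrected identity the alleged obstacle evaporates: applying $I^t$ to Proposition~\ref{prop:g_rightarrow}, the term $(1-t)g_m(\bk;t)_{\ua}$ gives $(1-t)G_m(\bk;t)_{\ua}$, the term $g_m(\bk;t)_{\ra}$ gives $G_m(\bk;t)_{\ra}+t\,G_m(\bk;t)_{\ua}$, and the remaining two terms give $-(1-t)G_m(\bk_{\ua};t)+(1-t)G_{m-1}(\bk_{\ra\ua};t)$; the coefficient of $G_m(\bk;t)_{\ua}$ is $(1-t)+t=1$, which is exactly the stated third recurrence. So no rewriting of Proposition~\ref{prop:g_rightarrow} is needed, and the speculative final portion of your argument (the "$(2-t)$ coefficient" discussion and the guessed auxiliary relation involving $g_{m-1}(\bk_{\ra\ua};t)_{\ua}$) should be deleted and replaced by the one-line computation above. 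As written, your proposal does not establish the third recurrence.
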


\section{Proof of Main theorem}
In this section, we prove Theorem~\ref{thm:main}. For a non-empty index $\bk=(k_1, \ldots, k_r)$ and an integer $m\in \bZ_{\ge -1}$, set
\begin{align*}
 h_m(\bk; t)
 \coloneqq&\sum_{l=1}^{r}
 \sum_{\substack{e_{1}+\cdots+e_{l}+e_{1}'+\cdots+e_{l}'=m \\ e_1, \ldots, e_l, e'_1, \ldots, e'_l \ge0}}
 t^{r-l+e_{1}+\cdots+e_{l}}
\sum_{\substack{\bk=(\bk_{1},\dots,\bk_{l}) \\ {\rm dep}(\bk_{i})>0}}\\
 &\qquad \prod_{l'=1}^{l}{\wt(\bk_{l'})-\dep(\bk_{l'})+e_{l'}+\delta_{l',1}-2 \choose e_{l'}}\\
 &\qquad \times\left((\wt(\bk_{1}),\dots,\wt(\bk_{l}))\oplus\boldsymbol{e}\oplus\boldsymbol{e}'\right) \in \cI^t.
\end{align*}
If $m=-1$ then the above summation is empty, and therefore $h_{-1}(\bk; t)=0$.
\begin{prop} \label{prop:G=h}
For any non-empty index $\bk$ and any non-negative integer $m$, we have
\begin{align*}
 G_m(\bk;t)=h_m(\bk; t).
\end{align*}
\end{prop}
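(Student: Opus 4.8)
The plan is to show that $h_m(\bk;t)$ satisfies the very recursion that $G_m(\bk;t)$ satisfies by Proposition~\ref{prop:recurrence_G}, and then to conclude by induction. Every non-empty index is either $(1)$, or of the form $\bk'_{\ua}$ (with $\dep(\bk'_{\ua})=\dep(\bk')$ and $\wt(\bk'_{\ua})=\wt(\bk')+1$), or of the form $\bk'_{\ra}$ (with $\dep(\bk'_{\ra})=\dep(\bk')+1$), so one runs a lexicographic induction on the triple $(m,\dep(\bk),\wt(\bk))$: in the $\ua$-recursion the right-hand terms carry either a strictly smaller weight at the same depth and the same $m$ (namely $\bk'$) or a strictly smaller $m$ (namely $\bk'_{\ua}=\bk$), and in the $\ra$-recursion they carry either a strictly smaller depth ($\bk'$ and $\bk'_{\ua}$) or a strictly smaller $m$ ($\bk'_{\ra\ua}=\bk_{\ua}$). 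Hence once we know that $h_m$ obeys the three identities of Proposition~\ref{prop:recurrence_G}, the difference $G_m(\bk;t)-h_m(\bk;t)$ equals, for every $\bk$ and $m$, a $\bQ[t]$-linear combination of such differences at strictly smaller parameters, so it vanishes by induction. The base case $h_m((1);t)=(1)\oplus(m)$ is immediate: only $l=1$ survives, the unique block is $(1)$, and the binomial degenerates to $\binom{e_1-1}{e_1}=\delta_{e_1,0}$ by the stated convention.

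For the $\ua$-recursion, I would apply Pascal's rule to the binomial attached to the last block of each decomposition of $\bk_{\ua}$. Writing that binomial as $\binom{N+1+e_l}{e_l}=\binom{N+e_l}{e_l}+\binom{(N+1)+(e_l-1)}{e_l-1}$, where $N$ collects the part independent of $e_l$, the first summand rebuilds $h_m(\bk;t)$ with the last output entry raised by one, i.e.\ $h_m(\bk;t)_{\ua}$, while in the second summand the substitution $e_l\mapsto e_l-1$ contributes one extra factor $t$, lowers the total budget from $m$ to $m-1$, and rebuilds $t\,h_{m-1}(\bk_{\ua};t)_{\ua}$; summing over everything gives exactly the second identity of Proposition~\ref{prop:recurrence_G}.

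For the $\ra$-recursion, I would split the sum defining $h_m(\bk_{\ra};t)$ according to whether, in a decomposition of $\bk_{\ra}$, the appended $1$ forms its own final block (``case A'') or is absorbed into the preceding block (``case B''). Absorbing the $1$ leaves $\wt-\dep$ of that block, hence its attached binomial, unchanged, so case B equals $t\,h_m(\bk;t)_{\ua}$ (the factor $t$ coming from $t^{r+1-l}$ versus $t^{r-l}$, the arrow $\ua$ from the $+1$ in the last weight). In case A the binomial attached to the singleton block $(1)$ is $\binom{e_l-2}{e_l}$, which is $1$ for $e_l=0$, $-1$ for $e_l=1$, and $0$ otherwise; collapsing it expresses case A as $\sum_{s\ge0}h_{m-s}(\bk;t)$ with the entry $1+s$ appended, minus $t\sum_{s\ge0}h_{m-1-s}(\bk;t)$ with the entry $2+s$ appended. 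The remaining task is to rewrite this combination as $(1-t)h_m(\bk;t)_{\ua}+h_m(\bk;t)_{\ra}-(1-t)h_m(\bk_{\ua};t)+(1-t)h_{m-1}(\bk_{\ra\ua};t)$, again by Pascal-type manipulations of the binomial on the last block of $\bk$ together with a re-indexing of the free (``primed'') variables; adding back case B then reproduces the third identity of Proposition~\ref{prop:recurrence_G}, and the proposition follows.

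The easy parts are the base case and the $\ua$-recursion. The main obstacle is this last step of case A in the $\ra$-recursion: the degeneracy of $\binom{e_l-2}{e_l}$, the free primed variables (which simultaneously shift the trailing entry and eat into the weight budget), and the four terms of the target expression all have to be reconciled in one bookkeeping argument, and making the cancellations line up — as the small cases $\bk=(1)$, $m=0,1$ already illustrate — is where the real content of the proposition lies.
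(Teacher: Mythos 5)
Your overall strategy coincides with the paper's: verify that $h_m(\bk;t)$ obeys the three recurrences of Proposition~\ref{prop:recurrence_G} and conclude $G_m(\bk;t)=h_m(\bk;t)$ by induction (your lexicographic induction on $(m,\dep(\bk),\wt(\bk))$ is a correct way to make the paper's ``characterized by the recurrence relations'' precise). Your base case, your Pascal-rule proof of the $\ua$-recurrence, and your case~A/case~B split of $h_m(\bk_{\ra};t)$ (case~B $=t\,h_m(\bk;t)_{\ua}$, case~A governed by the degenerate binomial $\binom{e-2}{e}$) are all correct as far as they go.

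The genuine gap is that the $\ra$-recurrence is never actually proved. After the split you must show that case~A equals $(1-t)h_m(\bk;t)_{\ua}+h_m(\bk;t)_{\ra}-(1-t)h_m(\bk_{\ua};t)+(1-t)h_{m-1}(\bk_{\ra\ua};t)$, and this is not a formal consequence of what you have written: the terms $h_m(\bk_{\ua};t)$ and $h_{m-1}(\bk_{\ra\ua};t)$ must themselves be re-expanded over block decompositions (in $\bk_{\ra\ua}$ the final entry $2$ may or may not merge with the preceding block), and the required cancellation mixes the $e$-variables, the primed variables, and a nontrivial identity among the block binomials --- the analogue, on the $h$-side, of Lemma~\ref{lem:recurrence_f} and of the bookkeeping in Proposition~\ref{prop:g_rightarrow}. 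You defer exactly this step to unspecified ``Pascal-type manipulations'' and yourself call it the main obstacle, so the core content of Proposition~\ref{prop:recurrence_h} is asserted rather than established. The paper closes this step by a different device: Lemma~\ref{lem:gen_h} packages all of $h_m(\bk;t)$ into the generating series $\sum_m \frh_m(\bk;t)u^m=\sigma(X(\bk))$, a closed product in the Hoffman algebra, after which each of the three recurrences amounts to right-multiplication by an explicit series and the $\ra$-case reduces to the one-line identity $S=0$. To complete your argument you would need either to import that generating-function computation or to supply the missing binomial identity and carry out the matching of all four terms explicitly; as it stands, the proof is incomplete at precisely the point where the work lies.
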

To show Proposition \ref{prop:G=h}, we will prove that $h_m(\bk;t)$ satisfies the same recurrence relations as $G_m(\bk; t)$.
\begin{prop} \label{prop:recurrence_h}
 For any non-empty index $\bk$ and any non-negative integer $m$, we have
 \begin{align*}
 h_m((1);t)
 &=(1)\oplus(m),\\
  h_m(\bk_{\ua};t)
  &=h_m(\bk;t)_{\ua}+th_{m-1}(\bk_{\ua};t)_{\ua},\\
  h_m(\bk_{\ra};t)
   & =h_m(\bk;t)_{\ua}+h_m(\bk;t)_{\ra}-(1-t)h_m(\bk_{\ua};t)+(1-t)h_{m-1}(\bk_{\ra\ua};t).
 \end{align*}
\end{prop}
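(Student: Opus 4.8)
The plan is to verify each of the three recurrence relations in Proposition~\ref{prop:recurrence_h} directly from the explicit definition of $h_m(\bk;t)$, which is entirely a statement about binomial-coefficient identities and bookkeeping of compositions; no analytic input is needed. The first identity $h_m((1);t)=(1)\oplus(m)$ is immediate: for $\bk=(1)$ the only choice is $l=1$, $\bk_1=(1)$, and the binomial factor is $\binom{1-1+e_1+1-2}{e_1}=\binom{e_1-1}{e_1}=\delta_{e_1,0}$ by the convention $\binom{n-1}{n}$, so we must have $e_1=0$ and $e_1'=m$, giving $(1)\oplus(0)\oplus(m)=(1+m)=(1)\oplus(m)$. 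So the real content is the two arrow recurrences.

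For the up-arrow relation, I would compare the three sums $h_m(\bk_{\ua};t)$, $h_m(\bk;t)_{\ua}$, and $h_{m-1}(\bk_{\ua};t)_{\ua}$ by looking at the innermost blocks. Fix a decomposition $\bk_\ua=(\bk_1,\dots,\bk_l)$; since only the last entry of $\bk_\ua$ differs from $\bk$, this corresponds to a decomposition of $\bk$ with $\wt(\bk_l)$ increased by one while $\dep(\bk_l)$, hence $\wt(\bk_{l'})-\dep(\bk_{l'})$ for $l'<l$, is unchanged. The last binomial factor is therefore $\binom{\wt(\bk_l)-\dep(\bk_l)+e_l+\delta_{l,1}-1}{e_l}$ versus $\binom{\wt(\bk_l)-\dep(\bk_l)+e_l+\delta_{l,1}-2}{e_l}$, and the Pascal identity $\binom{N-1}{e}=\binom{N-2}{e}+\binom{N-2}{e-1}$ splits $h_m(\bk_\ua;t)$ into exactly $h_m(\bk;t)_\ua$ plus a term that, after reindexing $e_l\mapsto e_l+1$ in the last block and absorbing one factor of $t$ (coming from the $t^{e_1+\cdots+e_l}$ prefactor), becomes $t\,h_{m-1}(\bk_\ua;t)_\ua$. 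The only subtlety is checking this at the boundary $e_l=0$ and confirming the edge conventions $\binom{n-1}{n}$ and $h_{-1}=0$ make the bookkeeping consistent; this mirrors exactly the structure of the proof of Proposition~\ref{prop:g_uparrow}.

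The right-arrow relation $h_m(\bk_{\ra};t)=h_m(\bk;t)_\ua+h_m(\bk;t)_\ra-(1-t)h_m(\bk_\ua;t)+(1-t)h_{m-1}(\bk_{\ra\ua};t)$ is the heart of the matter and I expect it to be the main obstacle. Here $\bk_\ra=(k_1,\dots,k_r,1)$, so in a decomposition $\bk_\ra=(\bk_1,\dots,\bk_l)$ the last new entry $1$ either forms its own singleton block $\bk_l=(1)$ or is appended to the previous block. In the first case $\wt(\bk_l)=\dep(\bk_l)=1$ so the last binomial is $\binom{e_l+\delta_{l,1}-2}{e_l}$, forcing $e_l=0$ unless $l=1$; this contributes the $h_m(\bk;t)_\ra$ piece and (when $l=1$, i.e. $\bk=(1)$... but more generally via $\delta_{l,1}$) pieces that must recombine. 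In the second case appending $1$ to $\bk_l$ raises $\wt(\bk_l)$ by $1$ without changing anything else, exactly as in the up-arrow analysis. The plan is to split $h_m(\bk_\ra;t)$ into these two families, recognize the "singleton" family and the "appended" family, then use the $t^{r-l+e_1+\cdots+e_l}$ prefactor (noting $\dep$ has increased, so the exponent of $t$ shifts) together with Pascal's identity and a reindexing of the $\boldsymbol{e}'$-variables to assemble the four terms on the right-hand side. The delicate accounting is matching the powers of $(1-t)$ and $t$: the $-(1-t)h_m(\bk_\ua;t)$ and $+(1-t)h_{m-1}(\bk_{\ra\ua};t)$ terms will emerge from a single Pascal split in the appended family, and getting the signs and the shift $m\to m-1$ to line up with the edge conventions is where care is required. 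Once this is established, Proposition~\ref{prop:G=h} follows immediately: $G_m(\bk;t)$ and $h_m(\bk;t)$ satisfy the same initial condition and the same two recurrences (Proposition~\ref{prop:recurrence_G}), which determine them uniquely by induction on $\dep(\bk)$ and, within fixed depth, on the structure of $\bk$ built up from $(1)$ by $\ua$ and $\ra$ moves.
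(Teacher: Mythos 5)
Your initial case and your $\ua$-recurrence argument are fine, and your overall strategy (direct manipulation of the defining sum of $h_m$, rather than the paper's route through the closed generating-function formula of Lemma~\ref{lem:gen_h}) can in fact be pushed through. The genuine gap is in the $\ra$-recurrence, and it begins with a concretely wrong step: when the final entry $1$ of $\bk_{\ra}$ forms its own block $\bk_l=(1)$ (necessarily $l\ge2$ since $\bk\neq\varnothing$), the binomial factor is $\binom{e_l-2}{e_l}$, and this does \emph{not} force $e_l=0$. The convention under which $h_m$ is the object the paper needs --- the one compatible with $\sum_{e\ge0}\binom{s+e}{e}A^e=(1-A)^{-s-1}$, which is exactly how Lemma~\ref{lem:gen_h} is proved --- is the generalized binomial coefficient, so $\binom{e_l-2}{e_l}=\delta_{e_l,0}-\delta_{e_l,1}$: the term $e_l=1$ survives with coefficient $-1$ and an extra factor $t$. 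If you discard it, the third identity is already false for $\bk=(1)$ and $m\ge1$: your reading gives $h_m((1,1);t)=t\,(m+2)+\sum_{a+b=m}(1+a,1+b)$, while the right-hand side equals $t\,(m+2)+(m+1,1)+(1-t)\sum_{a+b=m-1}(1+a,2+b)$, and the two differ by $t\sum_{a+b=m-1}(1+a,2+b)$. The discarded $-1$ terms are precisely what generate the factor $(1-t)$.

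Relatedly, the mechanism you predict (``the $-(1-t)h_m(\bk_{\ua};t)$ and $+(1-t)h_{m-1}(\bk_{\ra\ua};t)$ terms will emerge from a single Pascal split in the appended family'') is not how the bookkeeping works out. The appended family needs no Pascal split at all: appending $1$ to the last block leaves $\wt-\dep$, hence every binomial, unchanged, and only shifts the depth, so that family is exactly $t\,h_m(\bk;t)_{\ua}$. The $(1-t)$-terms come from the singleton family: with the $\pm1$ coefficients it equals $h_m(\bk;t)_{\ra}$ plus $(1-t)$ times the sum over $j\ge0$ of $h_{m-1-j}(\bk;t)$ with an entry $2+j$ appended, and identifying that last sum with $h_{m-1}(\bk_{\ra\ua};t)-t\,h_{m-1}(\bk_{\ua};t)_{\ua}$ requires running the same singleton/appended decomposition on $\bk_{\ra\ua}$ and then invoking the already-proved $\ua$-recurrence. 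So a direct combinatorial proof exists, but not along the lines you describe, and the $\ra$-case --- which you yourself flag as the heart of the matter and leave as a plan --- is exactly where your sketch breaks down. The paper sidesteps all of this by establishing the product formula for $\sum_{m\ge0}\frh_m(\bk;t)u^m$ via $\sigma(X(\bk))$ and reducing each recurrence to a short rational identity in $x,y,t,u$.
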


We introduce the notation of the algebraic setting for interpolated MZVs \cite{Hof97} (see also \cite{Li19}, for example). Let $\bQ\langle x,y \rangle$ be the non-commutative polynomial ring over $\bQ$ with variables $x$ and $y$, and put $\frH_t \coloneqq \bQ\langle x,y \rangle[t] \supset \frH^1_t \coloneqq \bQ[t]+y\frH_t$. By corresponding $(k_1, \ldots, k_r)$ to $yx^{k_1-1}\cdots yx^{k_r-1}$, we have $\cI^t \cong \frH^1_t$ as $\bQ[t]$-modules. For an index $\bk$ and $m \in \bZ_{\ge0}$, let $\frh_m(\bk; t)$ be the element in $\frH^1_t$ corresponding to $h_m(\bk;t)$. We set $\frh_{-1}(\bk;t)\coloneqq0$.

We define the map $\sigma$ as an automorphism of $\bQ\langle x, y\rangle[[u]]$ satisfying $\sigma(x)=x$ and $\sigma(y)=y(1-xu)^{-1}$. Note that, for $k_1, \ldots, k_r \in \bZ_{\ge1}$, we have
\begin{align*}
\sigma\bigl(yx^{k_1-1}\cdots yx^{k_r-1}\bigr)
=\sum_{N=0}^{\infty}u^N\sum_{\substack{e_1+\cdots+e_r=N \\e_1, \ldots, e_r \ge0}}
yx^{k_1+e_1-1}\cdots yx^{k_r+e_r-1}.
\end{align*}
By the above notations, we can write the generating function of $\frh_m(\bk; t)$ as
\begin{align*}
\sum_{m=0}^{\infty}\frh_m(\bk; t)u^m = \sigma(X(\bk))
\end{align*}
where
\begin{align*}
X(\bk)\coloneqq \sum_{l=1}^{r}\sum_{e_{1},\dots,e_{l}\ge 0}t^{r-l}(tu)^{e_{1}+\cdots+e_{l}}
\sum_{\substack{\bk=(\bk_{1},\dots,\bk_{l}) \\ {\rm dep}(\bk_{i})>0}}
& \prod_{l'=1}^l{\wt(\bk_{l'})-\dep(\bk_{l'})+e_{l'}+\delta_{l',1}-2 \choose e_{l'}}\\
& \quad \times yx^{\wt(\bk_{1})+e_{1}-1}\cdots yx^{\wt(\bk_{l})+e_{l}-1}
\end{align*}
and $\sigma$ is naturally extended to the automorphism of $\bQ\langle x, y\rangle[[t,u]]$.
\begin{lem}\label{lem:gen_h}
For any non-empty index $\bk=(k_1, \ldots, k_r)$, we have
\begin{align*}
\sum_{m=0}^{\infty}
\frh_m(\bk;t)u^m
=y\frac{1}{1-xu}\Bigl(\frac{x}{1-xtu}\Bigr)^{k_1-1}
\prod_{l=2}^r
\left\{
\Bigl(y\frac{1-xtu}{1-xu}+xt\Bigr)\Bigl(\frac{x}{1-xtu}\Bigr)^{k_l-1}
\right\}.
\end{align*}
\end{lem}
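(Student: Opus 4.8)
The plan is to evaluate $X(\bk)$ in closed form inside $\bQ\langle x,y\rangle[[t,u]]$ and then apply $\sigma$, using the identity $\sum_{m\ge0}\frh_m(\bk;t)u^{m}=\sigma(X(\bk))$ recorded just above. Write $A\coloneqq\frac{x}{1-xtu}=x(1-xtu)^{-1}$, which lies in the commutative subring $\bQ[x][[t,u]]$, so that $x$, $t$, $u$, $A$, and $1-xtu$ all commute with one another while an $x$ may never be moved across a $y$; thus inside any ``$y$-block'' of a word (a single $y$ followed only by powers of $x$ and scalars) the $x$-part can be rearranged freely. The one elementary input is the generalized binomial series
\[
\sum_{e\ge0}\binom{M+e}{e}z^{e}=(1-z)^{-(M+1)}\qquad(M\in\bZ),
\]
a formal identity valid for every integer $M$ (for $M\le -1$ the left-hand side is a polynomial), where $\binom{a}{e}\coloneqq a(a-1)\cdots(a-e+1)/e!$.

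First I would rewrite the prefactor attached to a block decomposition $\bk=(\bk_1,\dots,\bk_l)$ as $t^{r-l}(tu)^{e_1+\cdots+e_l}=\prod_{l'=1}^{l}t^{\dep(\bk_{l'})-1}(tu)^{e_{l'}}$, distribute it over the word $yx^{\wt(\bk_1)+e_1-1}\cdots yx^{\wt(\bk_l)+e_l-1}$, and sum each $e_{l'}\ge0$ out by the binomial series with $z=xtu$ and $M=\wt(\bk_{l'})-\dep(\bk_{l'})+\delta_{l',1}-2$. Combined with $x^{W-1}(1-xtu)^{-(W-d)}=x^{d-1}A^{W-d}$ for the first block and $x^{W-1}(1-xtu)^{-(W-d-1)}=(1-xtu)\,x^{d-1}A^{W-d}$ for the later ones (with $W=\wt(\bk_{l'})$, $d=\dep(\bk_{l'})$), this should yield
\[
X(\bk)=\sum_{l=1}^{r}\ \sum_{\substack{\bk=(\bk_{1},\dots,\bk_{l})\\ \dep(\bk_{i})>0}}\ y\,(xt)^{\dep(\bk_1)-1}A^{\wt(\bk_1)-\dep(\bk_1)}\prod_{l'=2}^{l}\Bigl(y(1-xtu)\,(xt)^{\dep(\bk_{l'})-1}A^{\wt(\bk_{l'})-\dep(\bk_{l'})}\Bigr).
\]

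Next I would recognise the right-hand side as the expansion of $Y(\bk)\coloneqq yA^{k_1-1}\prod_{l=2}^{r}\{(y(1-xtu)+xt)A^{k_l-1}\}$: choosing in each of the $r-1$ factors of $Y(\bk)$ either the summand $y(1-xtu)$ or the summand $xt$, the positions where $y(1-xtu)$ is selected, together with position $1$, cut $(k_1,\dots,k_r)$ into consecutive non-empty blocks $\bk_1,\dots,\bk_l$; in each maximal run of the resulting word between two consecutive $y$'s all the factors ($A$'s, $xt$'s, and the $1-xtu$ opening the run when $l'\ge2$) commute, so the $\dep(\bk_{l'})-1$ copies of $xt$ in the $l'$-th block collect just after its leading $y$ (resp.\ $y(1-xtu)$) while the $A$'s multiply to $A^{\wt(\bk_{l'})-\dep(\bk_{l'})}$. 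This reproduces the display termwise, whence $X(\bk)=Y(\bk)$. Finally, $\sigma$ is an algebra homomorphism with $\sigma(x)=x$, $\sigma(t)=t$, $\sigma(u)=u$ (so it fixes $A$ and $xt$) and $\sigma(y)=y(1-xu)^{-1}$, whence $\sigma(y(1-xtu))=y(1-xu)^{-1}(1-xtu)=y\frac{1-xtu}{1-xu}$; applying it gives
\[
\sum_{m\ge0}\frh_m(\bk;t)u^{m}=\sigma(X(\bk))=\sigma(Y(\bk))=y\frac{1}{1-xu}A^{k_1-1}\prod_{l=2}^{r}\Bigl\{\Bigl(y\frac{1-xtu}{1-xu}+xt\Bigr)A^{k_l-1}\Bigr\},
\]
which, since $A^{k_l-1}=\bigl(\frac{x}{1-xtu}\bigr)^{k_l-1}$, is exactly the asserted identity.

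I anticipate the only real difficulty to be bookkeeping: splitting the prefactor $t^{r-l}(tu)^{e_1+\cdots+e_l}$ across the blocks and merging it into $yx^{\wt(\bk_{l'})+e_{l'}-1}$ while scrupulously respecting that powers of $x$ (and $t$, $u$, $A$) commute within a $y$-block but that an $x$ never crosses a $y$; and the degenerate cases of the binomial series, notably blocks consisting entirely of $1$'s, for which $M\le-1$ and one genuinely needs values such as $\binom{-1}{1}=-1$ rather than $0$.
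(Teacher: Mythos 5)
Your proof is correct and follows essentially the same route as the paper's: sum out the $e_{l'}$'s with the binomial series $\sum_{e}\binom{s+e}{e}A^{e}=(1-A)^{-s-1}$, recognize the resulting sum over block decompositions as the expansion of $y\bigl(\tfrac{x}{1-xtu}\bigr)^{k_1-1}\prod_{l=2}^{r}\{(y(1-xtu)+xt)\bigl(\tfrac{x}{1-xtu}\bigr)^{k_l-1}\}$, and then apply $\sigma$. Your closing remark about needing the generalized binomial convention (e.g.\ $\binom{-1}{1}=-1$ for blocks of all $1$'s) correctly identifies the convention the paper uses implicitly in this same computation.
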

\begin{proof}
Since
\begin{align*}
\sum_{e=0}^{\infty}{s+e \choose e}A^{e}=\frac{1}{(1-A)^{s+1}},
\end{align*}
we have
\begin{align*}
X(\bk)
&=\sum_{l=1}^r t^{r-l}\sum_{e_1, \ldots, e_l \ge0}\sum_{\substack{\bk=(\bk_{1},\dots,\bk_{l}) \\ {\rm dep}(\bk_{i})>0}}
yx^{\wt(\bk_1)-1}(xtu)^{e_1}\cdots yx^{\wt(\bk_l)-1}(xtu)^{e_l}\\
&\quad \times \prod_{l'=1}^l{\wt(\bk_l')-\dep(\bk_l')+e_{l'}+\delta_{l',1}-2 \choose e_{l'}}\\
&=\sum_{l=1}^r t^{r-l}\sum_{\substack{\bk=(\bk_{1},\dots,\bk_{l}) \\ {\rm dep}(\bk_{i})>0}}\prod_{l'=1}^l
\left\{
yx^{\wt(\bk_{l'})-1}
\Bigl(\frac{1}{1-xtu}\Bigr)^{\wt(\bk_{l'})-\dep(\bk_{l'})+\delta_{l',1}-1}
\right\}\\
&=\sum_{l=1}^r \sum_{\substack{\bk=(\bk_{1},\dots,\bk_{l}) \\ {\rm dep}(\bk_{i})>0}}
\prod_{l'=1}^l
\left\{
yx^{\dep(\bk_{l'})-\delta_{l',1}}
\Bigl(\frac{x}{1-xtu}\Bigr)^{\wt(\bk_{l'})-\dep(\bk_{l'})+\delta_{l',1}-1}t^{\dep(\bk_{l'})-1}
\right\}.
\end{align*}
Here, since
\begin{align*}
& yx^{\dep(\bk_{l'})-\delta_{l',1}}
\Bigl(\frac{x}{1-xtu}\Bigr)^{\wt(\bk_{l'})-\dep(\bk_{l'})+\delta_{l',1}-1}t^{\dep(\bk_{l'})-1}\\
& = y(1-xtu)^{1-\delta_{l',1}}\Bigl(\frac{x}{1-xtu}\Bigr)^{k_{l',1}-1} \times xt\Bigl(\frac{x}{1-xtu}\Bigr)^{k_{l',2}-1}
  \cdots \times xt\Bigl(\frac{x}{1-xtu}\Bigr)^{k_{l',\dep(\bk_{l'})}-1}
\end{align*}
for $\bk_{l'}=(k_{l',1},\dots,k_{l',\dep(\bk_{l'})})$,
we obtain
\begin{equation} \label{eq:calcX}
X(\bk)=y\Bigl(\frac{x}{1-xtu}\Bigr)^{k_1-1}
\prod_{l=2}^r
\left\{
\bigl(y(1-xtu)+xt\bigr)\Bigl(\frac{x}{1-xtu}\Bigr)^{k_l-1}
\right\}.
\end{equation}
Moreover, since
\begin{align*}
\sigma(x)=x,\qquad \sigma(y)=y\frac{1}{1-xu}, \qquad \sigma\bigl(y(1-xtu)+xt\bigr)=y\frac{1-xtu}{1-xu}+xt,
\end{align*}
we complete the proof.
\end{proof}
\begin{proof}[Proof of Proposition \ref{prop:recurrence_h}]
By definition,
\begin{align*}
h_m((1);t)=\sum_{e+e'=m}t^e{e-1 \choose e}((1)\oplus(e)\oplus(e'))=(1)\oplus(m).
\end{align*}

 From Lemma \ref{lem:gen_h},  we have
 \begin{align*}
 \sum_{m=0}^{\infty}\frh_m(\bk_{\ua};t)u^m
 &=\Bigl(\sum_{m=0}^{\infty}\frh_m(\bk; t)u^m\Bigr)\times \frac{x}{1-xtu}, \\
 \sum_{m=0}^{\infty}\frh_m(\bk; t)_{\ua}u^m
 &=\Bigl(\sum_{m=0}^{\infty}\frh_m(\bk; t)u^m\Bigr)\times x,\\
 \sum_{m=0}^{\infty}\frh_{m-1}(\bk_{\ua};t)_{\ua}u^m
 &=\Bigl(\sum_{m=0}^{\infty}\frh_m(\bk; t)u^m\Bigr)\times \frac{x^2u}{1-xtu}.
 \end{align*}
 Thus, we have
 \begin{align*}
 &\sum_{m=0}^{\infty}
 \bigl\{
 \frh_m(\bk_{\ua};t)-\frh_m(\bk; t)_{\ua}-t\frh_{m-1}(\bk_{\ua}; t)_{\ua}
 \bigr\}u^m\\
 &=\Bigl(\sum_{m=0}^{\infty}\frh_m(\bk; t)u^m\Bigr)
 \times \Bigl(\frac{x}{1-xtu}-x-t\frac{x^2u}{1-xtu}\Bigr)=0.
 \end{align*}
 Similarly, from Lemma \ref{lem:gen_h},  we have
 \begin{align*}
 \sum_{m=0}^{\infty}\frh_m(\bk_{\ra};t)u^{m}
 &=\Bigl(\sum_{m=0}^{\infty}\frh_m(\bk;t)u^{m}\Bigr)
 \times \Bigl(y\frac{1-xtu}{1-xu}+xt\Bigr),\\
 \sum_{m=0}^{\infty}\frh_m(\bk;t)_{\ua}u^{m}
 &=\Bigl(\sum_{m=0}^{\infty}\frh_m(\bk;t)u^{m}\Bigr)
 \times x,\\
 \sum_{m=0}^{\infty}\frh_m(\bk;t)_{\ra}u^{m}
 &=\Bigl(\sum_{m=0}^{\infty}\frh_m(\bk;t)u^{m}\Bigr)
 \times y,\\
 \sum_{m=0}^{\infty}\frh_m(\bk_{\ua};t)u^{m}
 &=\Bigl(\sum_{m=0}^{\infty}\frh_m(\bk;t)u^{m}\Bigr)
 \times \Bigl(\frac{x}{1-xtu}\Bigr), \\
 \sum_{m=0}^{\infty}\frh_{m-1}(\bk_{\ra\ua};t)u^{m}
 &=\Bigl(\sum_{m=0}^{\infty}\frh_m(\bk;t)u^{m}\Bigr)
 \times \Bigl(y\frac{1-xtu}{1-xu}+xt\Bigr)\Bigl(\frac{x}{1-xtu}\Bigr)u.
 \end{align*}
Therefore, we obtain
\begin{align*}
&\sum_{m=0}^{\infty}
\bigl\{\frh_m(\bk_{\ra}; t)
-\frh_m(\bk; t)_{\ua}
-\frh_m(\bk; t)_{\ra}
+(1-t)\frh_m(\bk_{\ua}; t)
-(1-t)\frh_{m-1}(\bk_{\ra\ua}; t)
\bigr\}u^m\\
&=\Bigl(\sum_{m=0}^{\infty}\frh_m(\bk;t)u^{m}\Bigr)\times S,
\end{align*}
where
\begin{align*}
S & =\left(y\frac{1-xtu}{1-xu}+xt\right)-x-y+\left(\frac{x}{1-xtu}\right)(1-t)\\
 & \quad -\left(y\frac{1-xtu}{1-xu}+xt\right)\left(\frac{x}{1-xtu}\right)(1-t)u\\
 & =0.
\end{align*}
This completes the proof.
\end{proof}
\begin{proof}[Proof of Proposition \ref{prop:G=h}]
Note that $G_m(\bk;t)$ and $h_m(\bk;t)$ are characterized by the recurrence relations given in Propositions \ref{prop:recurrence_G} and \ref{prop:recurrence_h}, respectively. Since these recurrence relations have exactly the same form, $G_m(\bk;t) = h_m(\bk;t)$.
\end{proof}
Let $\tau$ be the anti-automorphism of $\frH$ that interchanges $x$ and $y$, and extend it to $\frH[[t,u]]$ by $\tau(\sum_{m,n=0}^{\infty} c_{m,n}t^{m}u^{n}) = \sum_{m,n=0}^{\infty} \tau(c_{m,n})t^{m}u^{n}$.
\begin{prop} \label{prop:dual}
 For any non-empty index $\bk=(k_1, \ldots, k_r)$ and a non-negative integer $m$, we have
 \begin{align} \label{eq:dual}
  \sum_{m=0}^{\infty}u^{m}\sum_{\substack{\wt(\be)=m\\\dep(\be)=\dep(\bk^{\dagger})}}
  I^{t}\bigl((\bk^{\dagger}\oplus\be)^{\dagger}\bigr)
  &= \tau\sigma\tau(X(\bk)).
 \end{align}
\end{prop}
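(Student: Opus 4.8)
The plan is to convert \eqref{eq:dual} into a purely algebraic identity in $\bQ\langle x,y\rangle[[t,u]]$ and then to verify that identity by pushing an explicit word through the automorphism $\tau\sigma\tau$. For an index $\bl=(l_1,\dots,l_s)$ write $W(\bl)\coloneqq yx^{l_1-1}\cdots yx^{l_s-1}\in\frH^1_t$; I will use the two standard facts that $\tau$ realizes index duality, $\tau(W(\bl))=W(\bl^{\dagger})$ (clear from $\bl=(\{1\}^{a_1-1},b_1+1,\dots)\leftrightarrow y^{a_1}x^{b_1}\cdots$ and that $\tau$ reverses words and swaps $x$ and $y$), and that $\sigma$ produces the Ohno sum, $\sigma(W(\bl))=\sum_{N\ge0}u^{N}\sum_{\wt(\be)=N,\ \dep(\be)=s}W(\bl\oplus\be)$.

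First I would record the description of $I^t$ at the level of words. Let $\ell\colon y\frH_t\to\frH_t$ be the $\bQ[t]$-linear map stripping the leading $y$, and let $\mu_t$ be the $\bQ[t]$-algebra endomorphism of $\bQ\langle x,y\rangle$ with $\mu_t(x)=x$, $\mu_t(y)=y+tx$, both extended $u$-adically. Then $I^{t}(w)=y\,\mu_t(\ell(w))$ for every word $w\in y\frH_t$: for $w=W(\bl)$ this is exactly the assertion that turning each non-leading $y$ into $y$ or $tx$ amounts to leaving a comma or merging two consecutive entries, which is the definition of $I^t$, and the general case follows by $\bQ[t]$-linearity. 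Combining this with the two facts above, and applying $\sigma$-generation with $\bl=\bk^{\dagger}$, then $\tau$ (which carries $W(\bk^{\dagger}\oplus\be)$ to $W((\bk^{\dagger}\oplus\be)^{\dagger})$ and $W(\bk^{\dagger})$ to $W(\bk)$), then the $\bQ[t]$-linear map $I^{t}$, one sees that the left-hand side of \eqref{eq:dual} equals $I^{t}\bigl(\tau\sigma\tau(W(\bk))\bigr)$. Hence it remains to prove
\[
 I^{t}\bigl(\tau\sigma\tau(W(\bk))\bigr)=\tau\sigma\tau\bigl(X(\bk)\bigr)\qquad\text{in }\bQ\langle x,y\rangle[[t,u]].
\]

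The crux is to identify $\rho\coloneqq\tau\sigma\tau$. As a composite of an anti-automorphism, an automorphism and an anti-automorphism it is an automorphism, and on generators $\rho(y)=y$, $\rho(x)=(1-yu)^{-1}x$; the only consequence I need is $(1-yu)\rho(x)=x$. From it a short direct computation gives
\[
 \rho\!\left(\frac{x}{1-xtu}\right)=\bigl(1-(y+tx)u\bigr)^{-1}x\qquad\text{and}\qquad\rho\bigl(y(1-xtu)+xt\bigr)=y+tx,
\]
so, since $\rho$ is a ring homomorphism, feeding these into the product formula \eqref{eq:calcX} for $X(\bk)$ yields
\[
 \rho(X(\bk))=y\,Q^{k_1-1}\prod_{l=2}^{r}\bigl\{(y+tx)\,Q^{k_l-1}\bigr\},\qquad Q\coloneqq\bigl(1-(y+tx)u\bigr)^{-1}x.
\]
On the other hand $\rho(W(\bk))=y\,((1-yu)^{-1}x)^{k_1-1}\,y\,((1-yu)^{-1}x)^{k_2-1}\cdots y\,((1-yu)^{-1}x)^{k_r-1}$; stripping the leading $y$, applying $\mu_t$ (which sends $(1-yu)^{-1}x$ to $Q$ and each remaining $y$ to $y+tx$), and prepending $y$ according to $I^{t}(w)=y\,\mu_t(\ell(w))$ reproduces exactly the same expression. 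This is the desired identity, and the proposition follows.

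I expect the genuine content to be concentrated in the first step — spotting the clean forms $I^{t}=(\text{prepend }y)\circ\mu_t\circ\ell$ and $\tau\sigma\tau=\rho$ — after which everything is a mechanical (if noncommutative) power-series manipulation governed by the single relation $(1-yu)\rho(x)=x$. The remaining points are bookkeeping: since $\bk$ is admissible, $\bk^{\dagger}$ is admissible and each $W((\bk^{\dagger}\oplus\be)^{\dagger})$ is a bona fide index word, so $I^{t}$ is applied to honest elements of $\frH^1_t$; and $\rho(W(\bk))$ lies in $y\bQ\langle x,y\rangle[[t,u]]$ (its leading letter is the prepended $y$), so $\ell$ applies and all the $u$-adic operations are legitimate.
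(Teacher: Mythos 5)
Your proof is correct and follows essentially the same route as the paper: both compute $\tau\sigma\tau$ on the closed form \eqref{eq:calcX} of $X(\bk)$ via the images of $\frac{x}{1-xtu}$ and $y(1-xtu)+xt$, and identify the result with $I^{t}$ applied to $\tau\sigma\tau(yx^{k_1-1}\cdots yx^{k_r-1})$, i.e.\ to the generating series of the $(\bk^{\dagger}\oplus\be)^{\dagger}$. Your write-up merely makes explicit two facts the paper leaves implicit (the description $I^{t}=(\text{prepend }y)\circ\mu_t\circ\ell$ and the duality $\tau(W(\bl))=W(\bl^{\dagger})$ identifying the left-hand side), which is a welcome but not essentially different elaboration.
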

\begin{proof}
By \eqref{eq:calcX}, we have
\begin{align*}
\tau\sigma\tau(X(\bk))
=\tau\sigma\tau
\left(
y\Bigl(\frac{x}{1-xtu}\Bigr)^{k_{1}-1}
\prod_{l=2}^r
\left\{\bigl(y(1-xtu)+xt\bigr)\Bigl(\frac{x}{1-xtu}\Bigr)^{k_{l}-1}\right\}
\right).
\end{align*}
Since
\begin{align*}
\tau\sigma\tau\bigl(y(1-xtu)+xt\bigr)
 & =\tau\sigma\tau\bigl(y+(1-yu)xt\bigr)\\
 & =y+xt
\end{align*}
and
\begin{align*}
\tau\sigma\tau\left(\frac{x}{1-xtu}\right) & =\frac{1}{1-(1-yu)^{-1}xtu}(1-yu)^{-1}x\\
 & =\frac{1}{1-yu-xtu}x,
\end{align*}
we obtain
\begin{align*}
\tau\sigma\tau(X(\bk)) & =y\Bigl(\frac{1}{1-yu-xtu}x\Bigr)^{k_{1}-1}
\prod_{l=2}^r\left\{
(y+xt)\Bigl(\frac{1}{1-yu-xtu}x\Bigr)^{k_{l}-1}
\right\}\\
& =I^{t}\Bigl(
y\Bigl(\frac{1}{1-yu}x\Bigr)^{k_{1}-1}
\prod_{l=2}^r\left\{
y\Bigl(\frac{1}{1-yu}x\Bigr)^{k_{l}-1}
\right\}
\Bigr).
\end{align*}
This expression implies the equality \eqref{eq:dual}.
\end{proof}
\begin{proof}[Proof of Theorem \ref{thm:main}]
Since $g_m(\varnothing; 0)=\delta_{m,0}\cdot\varnothing$, the assertion is obvious for $\bk=\varnothing$. Assume that $\bk\neq\varnothing$. By Propositions \ref{prop:G=h} and \ref{prop:dual}, we have
\begin{align*}
& \sum_{m=0}^{\infty}u^m\left( Z^{t}\bigl(g_m(\bk; t)\bigr)-\sum_{\substack{\wt(\be)=m \\ \dep(\be)=\dep(\bk^{\dagger})}}
Z^{t}\bigl((\bk^{\dagger} \oplus \be)^{\dagger}\bigr) \right)\\
& = \hat{Z}(X(\bk)) - \hat{Z}(\tau\sigma\tau(X(\bk)))
\end{align*}
where $\hat{Z}$ is the natural extension of $Z$ to $\frH[[t,u]]$.
This expression vanishes by the assumption that $Z$ satisfies the Ohno-type relation. This completes the proof of Theorem~\ref{thm:main}.
\end{proof}
%

%


%


\section{Applications}\label{sec:applications}
\subsection{Interpolated $\cF$-multiple zeta values}
Let $\cA$ be the $\bQ$-algebra defined by
\begin{align*}
\cA
\coloneqq
\left.
\prod_{p}\bZ/p\bZ
\right/
\bigoplus_{p}\bZ/p\bZ,
\end{align*}
where $p$ runs through all the rational primes. For an index $\bk=(k_1, \ldots, k_r)$, we define $\cA$-multiple zeta value ($\cA$-MZV) $\zeta^{}_{\cA}(\bk)$ and $\cA$-multiple zeta-star value ($\cA$-MZSV) $\zeta^{\star}_{\cA}(\bk)$ as elements in $\cA$ by
\begin{align*}
\zeta^{}_{\cA}(\bk)
&\coloneqq
\Biggl(
\sum_{0<n_1<\cdots<n_r<p}\frac{1}{n^{k_1}_1\cdots n^{k_r}_r} \bmod{p}
\Biggr)_p,\\
\zeta^{\star}_{\cA}(\bk)
&\coloneqq
\Biggl(
\sum_{1\le n_1\le \cdots \le n_r\le p-1}\frac{1}{n^{k_1}_1\cdots n^{k_r}_r} \bmod{p}
\Biggr)_p.
\end{align*}
We set $\zeta^{}_{\cA}(\varnothing)=\zeta^{\star}_{\cA}(\varnothing) \coloneqq (1)_p \in \cA$.

On the other hand, let $\cZ$ denote the $\bQ$-subspace of $\bR$ generated by 1 and all MZVs $\zeta(\bk)$. For an index $\bk=(k_1, \ldots, k_r)$, we define $\cS$-multiple zeta value ($\cS$-MZV) $\zeta^{}_{\cS}(\bk)$ as an element of $\cZ/\zeta(2)\cZ$ by
\begin{align*}
\zeta^{}_{\cS}(\bk)
\coloneqq
\sum_{i=0}^{r}\zeta^{\ast}(k_1, \ldots, k_i)\zeta^{\ast}(k_r, \ldots, k_{i+1}) \bmod{\zeta(2)}\cZ.
\end{align*}
Here, $\zeta^{\ast}(\bl)$ is the constant term of $\ast$-regularized polynomial of MZV, which is an element in $\cZ$. Moreover, we define $\cS$-multiple zeta-star value ($\cS$-MZSV) $\zeta^{\star}_{\cS}(\bk)$ by
\begin{align*}
\zeta^{\star}_{\cS}(\bk)
\coloneqq
\sum_{\substack{\square\textrm{ is either a comma `,' } \\
 \textrm{ or a plus `+'}}}
 \zeta^{}_{\cS}(k_1 \square k_2 \square \cdots \square k_r) \in \cZ/\zeta(2)\cZ.
\end{align*}
We set $\zeta^{}_{\cS}(\varnothing)=\zeta^{\star}_{\cS}(\varnothing) \coloneqq1$. In \cite{KZ21}, Kaneko and Zagier conjectured that the $\cA$-MZVs and $\cS$-MZVs satisfy the same $\bQ$-linear relations.

For a non-empty index $\bk=(k_1, \ldots, k_r)=(\underbrace{1+\cdots+1}_{k_1}, \ldots, \underbrace{1+\cdots+1}_{k_r})$, we define the Hoffman dual index $\bk^{\vee}$ of $\bk$ by
\begin{align*}
\bk^{\vee}
\coloneqq
(\underbrace{1, \cdots, 1}_{k_1}+\underbrace{1, \cdots, 1}_{k_2}+\ldots+\underbrace{1, \cdots, 1}_{k_r}).
\end{align*}
For example, we have $(2,1,3)^{\vee}=(1+1, 1, 1+1+1)^{\vee}=(1,1+1+1,1,1)=(1,3,1,1)$.

\begin{thm}[{Ohno-type relation for $\cF$-MZVs, Oyama \cite{Oya18}}]\label{thm:Ohno-type_FMZV}
For a non-empty index $\bk$ and a non-negative integer $m$, we have
\begin{align*}
\sum_{\substack{\wt(\be)=m \\ \dep(\be)=\dep(\bk)}}
\zeta^{}_{\cF}(\bk \oplus \be)
=
\sum_{\substack{\wt(\be)=m \\ \dep(\be)=\dep(\bk^{\vee})}}
\zeta^{}_{\cF}\bigl((\bk^{\vee} \oplus \be)^{\vee}\bigr).
\end{align*}
\end{thm}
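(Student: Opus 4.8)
The statement is Oyama's theorem \cite{Oya18}. Observe first that $\cF$-MZVs do \emph{not} satisfy the $\dagger$-duality $\zeta_{\cF}(\bk)=\zeta_{\cF}(\bk^{\dagger})$, so the verbatim analogue of Theorem~\ref{thm:Ohno-type_MZV} fails; the Hoffman dual $\vee$ is the substitute that the finite world respects. I would therefore prove the theorem by transplanting the generating-function mechanism of Section~3 from $\dagger$ to $\vee$. Identify $\cI\cong\frH^1=\bQ+y\frH$ inside $\frH=\bQ\langle x,y\rangle$ via $(k_1,\dots,k_r)\leftrightarrow yx^{k_1-1}\cdots yx^{k_r-1}$, fix a non-empty index $\bk$, and let $\sigma$ be the automorphism of $\frH[[u]]$ with $\sigma(x)=x$, $\sigma(y)=y(1-xu)^{-1}$ used in the paper, which preserves $\frH^1[[u]]$. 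Since $\sigma(\bk)=\sum_{m\ge0}u^m\sum_{\wt(\be)=m,\,\dep(\be)=\dep(\bk)}(\bk\oplus\be)$, the generating series of the left-hand side of the theorem is $\hat\zeta_{\cF}(\sigma(\bk))$, where $\hat\zeta_{\cF}$ denotes the natural extension of $\zeta_{\cF}$ to $\frH^1[[u]]$.

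For the right-hand side, let $\phi$ be the $\bQ$-linear involution of $\frH^1$ fixing $\bQ$ and sending $yv\mapsto y\,\theta(v)$ for $v\in\frH$, where $\theta$ is the (order-preserving) automorphism of $\frH$ swapping $x$ and $y$; a short check shows that $\phi$ realizes the Hoffman dual, i.e.\ $\phi(\bk)=\bk^{\vee}$ under the identification above — and, unlike $\dagger$, this is defined for every non-empty index. Since $\phi$ acts coefficientwise in $u$, applying it to $\sigma(\bk^{\vee})$ gives $\phi\,\sigma\,\phi(\bk)=\sum_{m\ge0}u^m\sum_{\wt(\be)=m,\,\dep(\be)=\dep(\bk^{\vee})}(\bk^{\vee}\oplus\be)^{\vee}$, so $\hat\zeta_{\cF}(\phi\,\sigma\,\phi(\bk))$ is the generating series of the right-hand side. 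Thus the theorem is \emph{equivalent} to the single algebraic identity
\[
\hat\zeta_{\cF}\bigl(\sigma(\bk)\bigr)=\hat\zeta_{\cF}\bigl(\phi\,\sigma\,\phi(\bk)\bigr)
\]
holding for every non-empty index $\bk$ — that is, to the assertion that $\hat\zeta_{\cF}\circ\sigma$ is invariant under conjugation by $\phi$. This is the exact analogue of the fact that Ohno's Theorem~\ref{thm:Ohno-type_MZV} is equivalent to $\hat\zeta\circ\sigma=\hat\zeta\circ\tau\sigma\tau$, which for real MZVs follows from iterated-integral duality $\hat\zeta\circ\tau=\hat\zeta$.

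The main obstacle is precisely this displayed identity: the reduction above is routine bookkeeping, but showing that $\hat\zeta_{\cF}\circ\sigma$ is $\phi$-conjugation-invariant is the whole arithmetic content of the theorem, and the finite setting offers no integral to imitate Ohno's proof. Instead one must feed in the genuine symmetries of $\cF$-MZVs — the reversal relation $\zeta_{\cF}(k_1,\dots,k_r)=(-1)^{k_1+\cdots+k_r}\zeta_{\cF}(k_r,\dots,k_1)$ together with Hoffman's duality relation linking $\zeta_{\cF}$ and $\zeta_{\cF}^{\star}$ through $\vee$ — and verify that these combine to absorb the conjugation by $\phi$. Oyama's own argument carries this out by an induction on depth, a finite analogue of the Hirose--Imatomi--Murahara--Saito proof of Theorem~\ref{thm:Ohno-type_MZSV}. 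An alternative route, the one I would take, is the connected-sum method of Seki--Yamamoto: introduce a prime-wise finite ``connected sum'' $Z_{\cF}(\bk;\bl)$ (built from truncated harmonic sums glued by a binomial connector) that degenerates to the two sides of the theorem, prove a pair of transport relations moving a box from $\bk$ to $\bl$ and back while carrying the extra weight $m$, and iterate; there the difficulty is isolating the correct connector, with the congruence $\binom{p-1}{k}\equiv(-1)^{k}\pmod p$ doing the work that the reversal relation plays on the other side.
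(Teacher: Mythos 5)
Your reduction is correct as far as it goes: the identification $\cI\cong\frH^1$, the formula for $\sigma(\bk)$, and the observation that the map $\phi\colon yv\mapsto y\theta(v)$ (with $\theta$ the automorphism exchanging $x$ and $y$) realizes the Hoffman dual are all accurate, and the theorem is indeed equivalent to the single identity $\hat{\zeta}_{\cF}\bigl(\sigma(\bk)\bigr)=\hat{\zeta}_{\cF}\bigl(\phi\sigma\phi(\bk)\bigr)$ for every non-empty index $\bk$. But that equivalence is only a restatement of the theorem in generating-function language, and your proposal stops there. The one claim that carries the arithmetic content --- that the reversal relation $\zeta_{\cF}(k_1,\dots,k_r)=(-1)^{k_1+\cdots+k_r}\zeta_{\cF}(k_r,\dots,k_1)$ together with Hoffman's duality ``combine to absorb the conjugation by $\phi$'' --- is asserted, not proved; no computation shows how these symmetries interact with $\sigma$, and in fact Hoffman's duality alone relates $\zeta_{\cF}$ to $\zeta^{\star}_{\cF}$ through $\vee$ without any shift by $\be$, so some genuinely new mechanism (Oyama's depth induction, or a connected sum with an explicit binomial connector and verified transport relations) is required. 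Your second route names the Seki--Yamamoto method but leaves both the connector and the transport identities unexhibited, which is precisely where such proofs succeed or fail. So there is a genuine gap: the proposal is a correct reformulation plus a survey of strategies, not a proof.

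For comparison, note that the paper itself offers no proof of this statement: it is imported verbatim from Oyama \cite{Oya18} and used as the hypothesis ``$Z(\bk)=\zeta_{\cF}(\bk_{\da})$ satisfies the Ohno-type relation'' in the derivation of Theorem~\ref{thm:Ohno-type_tFMZV}. In that sense a citation is the intended treatment here; but judged as a self-contained argument, your attempt establishes only the (routine) dictionary between the two sides and leaves the theorem itself untouched.
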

\begin{thm}[{Ohno-type relation for $\cF$-MZSVs, Hirose--Imatomi--Murahara--Saito \cite{HIMS20}}]\label{thm:Ohno-type_FMZSV}
For a non-empty index $\bk$ and a non-negative integer $m$, we have
\begin{align*}
\sum_{\substack{\wt(\be)=m \\ \dep(\be)=\dep(\bk)}}
c_2(\bk, \be)\zeta^{\star}_{\cF}(\bk \oplus \be)
=
\sum_{\substack{\wt(\be)=m \\ \dep(\be)=\dep(\bk^{\vee})}}
\zeta^{\star}_{\cF}\bigl((\bk^{\vee} \oplus \be)^{\vee}\bigr).
\end{align*}
Here,
\begin{align*}
c_2((k_1, \ldots, k_r),(e_1, \ldots, e_r))
\coloneqq
\prod_{i=1}^r\binom{k_i+e_i+\delta_{i,1}+\delta_{i,r}-2}{e_i},
\
\binom{n-1}{n}
=
\begin{cases}
1 & \text{if $n=0$}, \\
0 & \text{otherwise}.
\end{cases}
\end{align*}
\end{thm}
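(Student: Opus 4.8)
The plan is to deduce Theorem~\ref{thm:Ohno-type_FMZSV} as the value at $t=1$ of a Hoffman-dual counterpart of Theorem~\ref{thm:main}, obtained by running Sections~2 and~3 again with the dual $\dagger$ replaced throughout by the Hoffman dual $\vee$. The input is Oyama's Theorem~\ref{thm:Ohno-type_FMZV}, which I read as the statement that $\zeta_{\cF}$ satisfies the \emph{Hoffman-dual Ohno-type relation}:
\[
\sum_{\substack{\wt(\be)=m\\\dep(\be)=\dep(\bk)}}\zeta_{\cF}(\bk\oplus\be)=\sum_{\substack{\wt(\be)=m\\\dep(\be)=\dep(\bk^{\vee})}}\zeta_{\cF}\bigl((\bk^{\vee}\oplus\be)^{\vee}\bigr)\qquad(\bk\ \text{non-empty},\ m\ge0).
\]
Since $\cF$-multiple zeta values carry no admissibility constraint, this holds for all non-empty $\bk$, which is why Theorem~\ref{thm:Ohno-type_FMZSV} is stated in that generality.

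The first step is to introduce the gadget $g^{\vee}_m(\bk;t)\in\cI^t$, defined by the formula for $g_m(\bk;t)$ with $k'_j=k_j+\delta_{j,1}$ replaced everywhere by $k''_j\coloneqq k_j+\delta_{j,1}+\delta_{j,\dep(\bk)}$, so that the last block of every partition of $\bk$ is decorated in exactly the same way as the first one (this reflects the extra $\delta_{i,r}$ in $c_2$). Repeating the Kronecker-delta computation of the Remark after Theorem~\ref{thm:main} gives $g^{\vee}_m(\bk;0)=\sum_{\wt(\be)=m,\,\dep(\be)=\dep(\bk)}(\bk\oplus\be)$ and
\[
g^{\vee}_m(\bk;1)=\sum_{\substack{\wt(\be)=m\\\dep(\be)=\dep(\bk)}}c_2(\bk,\be)(\bk\oplus\be).
\]
Next I would establish the analogues of Propositions~\ref{prop:g_noarrow}--\ref{prop:recurrence_h} for $g^{\vee}_m$ and $G^{\vee}_m\coloneqq I^t(g^{\vee}_m)$: the $\ua$-recurrence has the same shape and Lemma~\ref{lem:recurrence_f} is reused verbatim, while the $\ra$-recurrence picks up correction terms, because the operation $\bk\mapsto\bk_{\ra}$ moves the right-hand decoration; these are again absorbed by binomial identities of the type in Lemma~\ref{lem:recurrence_f}. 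Passing to the explicit form $h^{\vee}_m$ and its $\frH^1_t$-incarnation $\frh^{\vee}_m$, one computes $\sum_{m\ge0}\frh^{\vee}_m(\bk;t)u^m=\sigma(X^{\vee}(\bk))$ in closed form as in Lemma~\ref{lem:gen_h}, the sole change being that \emph{both} the first and the last factor of the product are of the $\bigl(y\tfrac{1-xtu}{1-xu}+xt\bigr)$-type (rather than only the first); matching recurrences then gives $G^{\vee}_m=\frh^{\vee}_m$, exactly as in Proposition~\ref{prop:G=h}.

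The algebraic heart is the Hoffman-dual analogue of Proposition~\ref{prop:dual}, namely
\[
\sum_{m\ge0}u^m\sum_{\substack{\wt(\be)=m\\\dep(\be)=\dep(\bk^{\vee})}}I^t\bigl((\bk^{\vee}\oplus\be)^{\vee}\bigr)=D\sigma D\bigl(X^{\vee}(\bk)\bigr),
\]
where $D$ is the $\bQ[t,u]$-linear operator realizing Hoffman duality, explicitly $D(yw)=y\,\phi(w)$ with $\phi$ the automorphism of $\bQ\langle x,y\rangle$ interchanging $x$ and $y$. As $D$ is not an anti-automorphism one cannot distribute it over a product as in the proof of Proposition~\ref{prop:dual}; instead one uses that $D\sigma D$ acts on $y\frH_t[[u]]$ by $yW\mapsto y(1-yu)^{-1}(\phi\sigma\phi)(W)$, where $\phi\sigma\phi$ \emph{is} an automorphism (given by $x\mapsto x(1-yu)^{-1}$, $y\mapsto y$), so that $D\sigma D(X^{\vee}(\bk))$ can be computed factor-by-factor after stripping the leading $y$. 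Granting this, for any $\bQ$-linear $Z\colon\cI\to R$ satisfying the Hoffman-dual Ohno-type relation, extended $\bQ[t]$-linearly with $Z^t\coloneqq Z\circ I^t$, the computation in the proof of Theorem~\ref{thm:main} goes through verbatim and gives
\[
\sum_{m\ge0}u^m\Bigl(Z^t\bigl(g^{\vee}_m(\bk;t)\bigr)-\!\!\sum_{\substack{\wt(\be)=m\\\dep(\be)=\dep(\bk^{\vee})}}\!\! Z^t\bigl((\bk^{\vee}\oplus\be)^{\vee}\bigr)\Bigr)=\hat Z\bigl(\sigma X^{\vee}(\bk)\bigr)-\hat Z\bigl(D\sigma D\,X^{\vee}(\bk)\bigr)=0,
\]
the vanishing being the hypothesis on $Z$ applied to $X^{\vee}(\bk)$, which lies in the $\bQ[t,u]$-span of non-empty index words. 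Taking $Z=\zeta_{\cF}$ (legitimate by the reformulated Theorem~\ref{thm:Ohno-type_FMZV}) and then $t=1$, so that $Z^1=\zeta_{\cF}\circ I^1=\zeta^{\star}_{\cF}$ and $g^{\vee}_m(\bk;1)=\sum_{\wt(\be)=m,\,\dep(\be)=\dep(\bk)}c_2(\bk,\be)(\bk\oplus\be)$, yields Theorem~\ref{thm:Ohno-type_FMZSV}.

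The step I expect to be the main obstacle is setting up $g^{\vee}_m$ and its $\ra$-recurrence consistently: unlike the decoration $\delta_{i,1}$ in $g_m$, the right-hand decoration $\delta_{i,r}$ is not preserved by $\bk\mapsto\bk_{\ra}$, so the recurrence of Proposition~\ref{prop:g_rightarrow} acquires genuinely new terms, and one must pin down the correct closed form of $\sum_m\frh^{\vee}_m(\bk;t)u^m$---in particular its last factor---so that these terms cancel. The remaining ingredients (the $\ua$-recurrence, the reduction $G^{\vee}_m=\frh^{\vee}_m$, the Hoffman-dual Proposition~\ref{prop:dual}, which is tractable because $\phi\sigma\phi$ is an automorphism, and the final cancellation) are then routine transcriptions of Sections~2 and~3.
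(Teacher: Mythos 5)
Your overall strategy (a Hoffman-dual analogue of Theorem~\ref{thm:main}, specialized at $t=1$) is viable in principle, but as written it has a genuine gap exactly at the point you yourself flag as the main obstacle, and the one concrete structural claim you commit to there is wrong. The correct generating function for your $g^{\vee}_m$ is \emph{not} obtained by turning the first/last factors into the $\bigl(y\tfrac{1-xtu}{1-xu}+xt\bigr)$-type: since the extra decoration $\delta_{j,r}$ only raises the $f$-argument of the last block by one, the closed form is the expression of Lemma~\ref{lem:gen_h} multiplied by one extra factor $\tfrac{1}{1-xtu}$ on the right (already at depth $1$ your description does not reproduce $f_0(k+2,m)$). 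Because your whole cancellation scheme for the $\ra$-recurrence is supposed to be engineered ``so that these terms cancel'' against this closed form, and you neither derive the recurrence nor the closed form, the heart of the argument is missing. (Your Hoffman-dual version of Proposition~\ref{prop:dual} via $D\sigma D$ and the automorphism $\phi\sigma\phi\colon x\mapsto x(1-yu)^{-1},\ y\mapsto y$ is sound and does match the corrected $X^{\vee}$, using $QxP=PxQ$ and the fact that $P=(1-yu-xtu)^{-1}$ commutes with $y+xt$; and your $t=1$ evaluation giving $c_2$ is correct.)

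The missing observation that closes the gap--and is how the paper actually proceeds--is that your gadget is nothing new: $g^{\vee}_m(\bk;t)=g_m(\bk_{\ua};t)_{\da}$, because adding $\delta_{j,r}$ to the last block's $f$-argument is exactly what $\bk\mapsto\bk_{\ua}$ does, and $\da$ undoes the shift in the output indices. The paper therefore never reruns Sections~2--3 with $\vee$: it is noted (and this is the paper's route to the statement, which is otherwise only cited from \cite{HIMS20}) that Theorem~\ref{thm:Ohno-type_FMZSV} is the $t=1$ case of Theorem~\ref{thm:Ohno-type_tFMZV}, and the latter is proved by setting $Z(\bk)\coloneqq\zeta_{\cF}(\bk_{\da})$, checking via Oyama's Theorem~\ref{thm:Ohno-type_FMZV} and the combinatorial compatibility $\sum_{\be}((\bk_{\da})^{\vee}\oplus\be)^{\vee}=\sum_{\be}\bigl((\bk^{\dagger}\oplus\be)^{\dagger}\bigr)_{\da}$ that this $Z$ satisfies the ordinary ($\dagger$-)Ohno-type relation, and then applying Theorem~\ref{thm:main} to $\bk_{\ua}$ verbatim. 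If you replace your ``redo the machinery'' step by this identification (or by deducing the corrected closed form of $\sum_m\frh^{\vee}_m(\bk;t)u^m$ directly from Lemma~\ref{lem:gen_h} applied to $\bk_{\ua}$), the rest of your argument, including the $t=1$ specialization $\zeta^1_{\cF}\circ I^1=\zeta^{\star}_{\cF}$ and $g^{\vee}_m(\bk;1)=\sum_{\be}c_2(\bk,\be)(\bk\oplus\be)$, goes through.
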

For a non-empty admissible index $\bk=(k_1, \ldots, k_r)$,
set $\bk_{\da} \coloneqq (k_1, \ldots, k_{r-1}, k_r-1)$.
Moreover, for $f = \sum_{i=1}^n a_i(t)\bk_i \in \cI^t$ where $\bk_1, \ldots, \bk_n$ is non-empty admissible indices,
set $f_{\da} \coloneqq \sum_{i=1}^n a_i(t)(\bk_i)_{\da} \in \cI^t$.
For $\cF \in \{\cA, \cS\}$ and an index $\bk=(k_1, \ldots, k_r)$, the second-named author and the third-named author studied \emph{the interpolated $\cF$-MZV}
\begin{align*} \label{eq:def_tFMZV}
\zeta^{t}_{\cF}(\bk)
\coloneqq
\zeta^{}_{\cF}\bigl(I^t(\bk)\bigr)
=\sum_{\substack{\square\textrm{ is either a comma `,' } \\
 \textrm{ or a plus `+'}}}
 t^{(\text{the number of `+'})}
 \zeta^{}_{\cF}(k_1 \square k_2 \square \cdots \square k_r)
\end{align*}
in \cite{MO19}. We set $\zeta^{t}_{\cF}(\varnothing)\coloneqq1$. For $\cF \in \{\cA, \cS\}$, we present the Ohno-type relation among interpolated $\cF$-MZVs, which is derived from Theorem~\ref{thm:main} and \ref{thm:Ohno-type_FMZV}.

\begin{thm}\label{thm:Ohno-type_tFMZV}
For a non-empty index $\bk$ and a non-negative integer $m$, we have
\begin{align*}
\zeta^{t}_{\cF}\bigl(g_m(\bk_{\ua};t)_{\da}\bigr)
=
\sum_{\substack{\wt(\be)=m \\ \dep(\be)=\dep(\bk^{\vee})}}
\zeta^{t}_{\cF}\bigl((\bk^{\vee} \oplus \be)^{\vee}\bigr).
\end{align*}
\end{thm}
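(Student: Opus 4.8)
The plan is to deduce Theorem~\ref{thm:Ohno-type_tFMZV} from the Main Theorem (Theorem~\ref{thm:main}) applied to a suitable $\bQ$-linear map $Z$ built out of $\zeta^{}_{\cF}$, exactly in the spirit of how Theorems~\ref{thm:Ohno-type_MZV} and \ref{thm:Ohno-type_MZSV} were recovered from it. The subtlety is that the $\cF$-side Ohno relation (Theorem~\ref{thm:Ohno-type_FMZV}) uses the \emph{Hoffman dual} $\bk^{\vee}$ rather than the $\dagger$-dual, and $\zeta^{}_{\cF}$ is defined on \emph{non-empty} indices only, so one cannot feed $\zeta^{}_{\cF}$ directly into Theorem~\ref{thm:main}. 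The standard device (going back to Oyama and used in \cite{HIMS20}) is that for a non-empty index $\bk$ one has the identity relating the two dualities through the arrow operations, namely $(\bk_{\ua})^{\dagger} = ((\bk^{\vee})_{\ua})$ up to the obvious reindexing, equivalently $\bigl((\bk^{\vee})_{\ua}\bigr)^{\dagger} = \bk_{\ua}$; more precisely, for admissible $\bk_{\ua}$ the $\dagger$-dual and the Hoffman dual of the associated index are interchanged by the maps $\ua$ and $\da$. So I would first record this duality lemma carefully.

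First I would define $Z \colon \cI \to \cZ/\zeta(2)\cZ$ (resp.\ $\cA$) on a \emph{non-empty} index $\bl$ by $Z(\bl) \coloneqq \zeta^{}_{\cF}(\bl_{\ua})$ if $\bl_{\ua}$ makes sense, and extend $\bQ$-linearly; then check, using Theorem~\ref{thm:Ohno-type_FMZV} together with the duality lemma, that this $Z$ satisfies the Ohno-type relation in the sense of the Main Theorem, where on the right-hand side the $\dagger$-dual of $\bk$ corresponds under $Z$ to the $\vee$-dual appearing in Theorem~\ref{thm:Ohno-type_FMZV}. The key computation is to match the two summations $\sum_{\wt(\be)=m} Z(\bk\oplus\be)$ and $\sum_{\wt(\be)=m} Z((\bk^{\dagger}\oplus\be)^{\dagger})$ against the $\cF$-sums $\sum \zeta^{}_{\cF}((\bk_{\ua})\oplus\be')$ and $\sum \zeta^{}_{\cF}(((\bk_{\ua})^{\vee}\oplus\be')^{\vee})$; the last entry of $\bk_{\ua}$ being $\ge 2$ is what lets the depths and the extra box $\be'$ line up correctly, and one must be slightly careful because adding to the last component interacts with the $\ua$/$\da$ shift — this matching is the main obstacle and the place where the combinatorics of $c_1$, $c_2$, and the definition of $g_m$ via $f_i$ all have to be reconciled, though much of it is already packaged inside Theorem~\ref{thm:main}.

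Once $Z$ is known to satisfy the Ohno-type relation, the Main Theorem gives $Z^{t}(g_m(\bk;t)) = \sum_{\wt(\be)=m,\ \dep(\be)=\dep(\bk^{\dagger})} Z^{t}((\bk^{\dagger}\oplus\be)^{\dagger})$, where $Z^{t} = Z \circ I^t$. Translating $Z$ back into $\zeta^{}_{\cF}$: the left side becomes $\zeta^{t}_{\cF}$ evaluated on $g_m$ of the shifted index with a compensating down-shift, which is precisely $\zeta^{t}_{\cF}\bigl(g_m(\bk_{\ua};t)_{\da}\bigr)$ once one commutes $I^t$ with the arrow operations (legitimate since $I^t$ only combines adjacent entries and $\ua/\da$ touch only the last entry, and $\ua$ of a non-empty index keeps it non-empty so $\da$ is well defined on every term); the right side, by the duality lemma again, becomes $\sum_{\wt(\be)=m,\ \dep(\be)=\dep(\bk^{\vee})} \zeta^{t}_{\cF}\bigl((\bk^{\vee}\oplus\be)^{\vee}\bigr)$. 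I would close by remarking that the argument is uniform in $\cF\in\{\cA,\cS\}$ since it uses only $\bQ$-linearity and Theorem~\ref{thm:Ohno-type_FMZV}, which holds for both. The one point demanding genuine care is the bookkeeping in the duality lemma and its interaction with $g_m$ and $I^t$ on the last coordinate; everything else is formal manipulation licensed by the Main Theorem.
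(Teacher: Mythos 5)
Your overall strategy is the paper's: build an auxiliary map $Z$ from $\zeta_{\cF}$ composed with an arrow shift, verify that it satisfies the Ohno-type relation using Theorem~\ref{thm:Ohno-type_FMZV} together with a $\dagger$/$\vee$ duality identity, then apply Theorem~\ref{thm:main} to the admissible index $\bk_{\ua}$ and translate back. But as written there is a genuine error in the arrow direction. You set $Z(\bl)\coloneqq\zeta_{\cF}(\bl_{\ua})$, whereas what is needed (and what the paper uses) is $Z(\bl)\coloneqq\zeta_{\cF}(\bl_{\da})$, which is well defined because $Z$ is only ever evaluated at admissible indices. With your definition the argument breaks at both ends. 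First, the Ohno-type relation for your $Z$ is not what Theorem~\ref{thm:Ohno-type_FMZV} plus duality gives, and it is in fact false modulo standard expectations: for $\bk=(3)$, $m=1$ it would assert $\zeta_{\cF}(5)=\zeta_{\cF}(2,3)+\zeta_{\cF}(1,4)$, i.e.\ $0=-5\,\frZ_{\cF}(5)$, which certainly cannot be derived from Theorem~\ref{thm:Ohno-type_FMZV}. Second, the translation of the left-hand side fails: with $Z=\zeta_{\cF}\circ(\,\cdot\,)_{\ua}$ one gets $Z^{t}\bigl(g_m(\bk_{\ua};t)\bigr)=\zeta^{t}_{\cF}\bigl(g_m(\bk_{\ua};t)_{\ua}\bigr)$, not the required $\zeta^{t}_{\cF}\bigl(g_m(\bk_{\ua};t)_{\da}\bigr)$; these differ, since by Proposition~\ref{prop:g_uparrow} one has $g_m(\bk_{\ua};t)_{\da}=g_m(\bk;t)+t\,g_{m-1}(\bk_{\ua};t)$, which is not $g_m(\bk;t)_{\ua}$ (nor $g_m(\bk_{\ua};t)_{\ua}$) once $m\ge1$ and $t\neq0$. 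Your parenthetical ``if $\bl_{\ua}$ makes sense'' suggests you may have intended $\da$ (it is $\bl_{\da}$ that needs admissibility), but the proof as written is not internally consistent.

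There is a second, related gap: your ``duality lemma'' is misstated. The termwise identity $(\bk_{\ua})^{\dagger}=(\bk^{\vee})_{\ua}$ is false in general; e.g.\ for $\bk=(1,2)$ one has $(\bk_{\ua})^{\dagger}=(1,3)^{\dagger}=(1,3)$ while $(\bk^{\vee})_{\ua}=(2,1)_{\ua}=(2,2)$. Termwise, $\bigl((\bk_{\ua})^{\dagger}\bigr)_{\da}$ equals the \emph{reversal} of $\bk^{\vee}$, and since $\zeta_{\cF}$ is not reversal-invariant this cannot simply be ignored. What the argument actually needs (and what the paper invokes) is the identity of \emph{sums}
\[
\sum_{\substack{\wt(\be)=m \\ \dep(\be)=\dep((\bl_{\da})^{\vee})}}\bigl((\bl_{\da})^{\vee}\oplus\be\bigr)^{\vee}
=\sum_{\substack{\wt(\be)=m \\ \dep(\be)=\dep(\bl^{\dagger})}}\bigl((\bl^{\dagger}\oplus\be)^{\dagger}\bigr)_{\da}
\]
for admissible $\bl$, applied with $\bl=\bk_{\ua}$; the underlying bijection is not the identity map on indices, so the lemma must be formulated and used at this summed level. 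With $Z(\bl)=\zeta_{\cF}(\bl_{\da})$ and this sum-level identity, your remaining steps (commuting $I^{t}$ with $\da$, applying Theorem~\ref{thm:main} to $\bk_{\ua}$, uniformity in $\cF\in\{\cA,\cS\}$) do go through exactly as in the paper.
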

\begin{rem}
Theorem~\ref{thm:Ohno-type_tFMZV} is an interpolation of Theorems~\ref{thm:Ohno-type_FMZV} and \ref{thm:Ohno-type_FMZSV}.
\end{rem}

\begin{proof}
Set $R \coloneqq \cA$ (resp.~$R\coloneqq \cZ/\zeta(2)\cZ$) for $\cF=\cA$ (resp.~$\cF=\cS$) and $Z(\bk) \coloneqq \zeta^{}_{\cF}(\bk_{\da})$.
Then, by Theorem~\ref{thm:Ohno-type_FMZV}, we have
\begin{align*}
\sum_{\substack{\wt(\be)=m \\ \dep(\be)=\dep(\bk)}}
Z(\bk \oplus \be)
&=\sum_{\substack{\wt(\be)=m \\ \dep(\be)=\dep(\bk)}}
\zeta^{}_{\cF}\bigl((\bk \oplus \be)_{\da}\bigr)\\
&=\sum_{\substack{\wt(\be)=m \\ \dep(\be)=\dep(\bk)}}
\zeta^{}_{\cF}(\bk_{\da} \oplus \be)\\
&=\sum_{\substack{\wt(\be)=m \\ \dep(\be)=\dep((\bk_{\da})^{\vee})}}
\zeta^{}_{\cF}\bigl(((\bk_{\da})^{\vee} \oplus \be)^{\vee}\bigr).
\end{align*}
Since
\begin{align*}
\sum_{\substack{\wt(\be)=m \\ \dep(\be)=\dep((\bk_{\da})^{\vee})}}
((\bk_{\da})^{\vee} \oplus \be)^{\vee}
=\sum_{\substack{\wt(\be)=m \\ \dep(\be)=\dep(\bk^{\dagger})}}
\bigl((\bk^{\dagger} \oplus \be)^{\dagger}\bigr)_{\da},
\end{align*}
we have
\begin{align*}
\sum_{\substack{\wt(\be)=m \\ \dep(\be)=\dep(\bk)}}
Z(\bk \oplus \be)
=\sum_{\substack{\wt(\be)=m \\ \dep(\be)=\dep(\bk^{\dagger})}}
\zeta^{}_{\cF}\bigl(((\bk^{\dagger} \oplus \be)^{\dagger})_{\da}\bigr)
=\sum_{\substack{\wt(\be)=m \\ \dep(\be)=\dep(\bk^{\dagger})}}
Z\bigl((\bk^{\dagger} \oplus \be)^{\dagger}\bigr),
\end{align*}
that is, we see that $Z(\bk)=\zeta^{}_{\cF}(\bk_{\da})$ satisfies the Ohno-type relation.
Therefore, by Theorem~\ref{thm:main}, we obtain
\begin{align*}
Z^{t}\bigl(g_m(\bk_{\ua};t)\bigr)
=\sum_{\substack{\wt (\be)=m \\ \dep (\be)=\dep (\bk^{\dagger})}}
Z^{t}\bigl(((\bk_{\ua})^{\dagger}\oplus\be)^{\dagger}\bigr).
\end{align*}
Since
\begin{align*}
Z^{t}\bigl(g_m(\bk_{\ua};t)\bigr)=\zeta^{t}_{\cF}\bigl(g_m(\bk_{\ua};t)_{\da}\bigr)
\end{align*}
and
\begin{align*}
\sum_{\substack{\wt (\be)=m \\ \dep (\be)=\dep (\bk^{\dagger})}}
Z^{t}\bigl(((\bk_{\ua})^{\dagger}\oplus\be)^{\dagger}\bigr)
&=\sum_{\substack{\wt (\be)=m \\ \dep (\be)=\dep (\bk^{\dagger})}}
\zeta^{t}_{\cF}\bigl((((\bk_{\ua})^{\dagger}\oplus\be)^{\dagger})_{\da}\bigr)\\
&=\sum_{\substack{\wt(\be)=m \\ \dep(\be)=\dep(\bk^{\vee})}}
\zeta^{t}_{\cF}\bigl((\bk^{\vee} \oplus \be)^{\vee}\bigr),
\end{align*}
we obtain the desired formula.
\end{proof}

\subsection{Sum formula}
In this subsection, we reprove the sum formula for interpolated MZVs which was first proved by Yamamoto as a corollary of our main theorem (Theorem~\ref{thm:main}).
\begin{thm}[{Sum formula for interpolated MZVs, Yamamoto \cite{Yam13}}]
For positive integers $k$ and $r$ with $k>r$, we have
\begin{align*}
\sum_{\substack{k_1+\cdots+k_r=k \\ k_1, \ldots, k_{r-1}\ge1, k_r\ge2}}
\zeta^{t}(k_1, \ldots, k_r)
=\Biggl\{\sum_{j=0}^{r-1}\binom{k-1}{j}t^j(1-t)^{r-1-j}\Biggr\}\zeta(k).
\end{align*}
\end{thm}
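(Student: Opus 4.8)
The plan is to derive the sum formula from the depth-one case of the main theorem, namely formula \eqref{eq:dep1}: $Z^t(g_m((k);t)) = f_0(k+1,m)Z(k+m)$, applied with $(R,Z)=(\bR,\zeta)$ so that $Z^t=\zeta^t$. First I would spell out what \eqref{eq:dep1} says for a concrete admissible index of depth one, and what the dual side of the Ohno-type relation looks like for such an index. Indeed, for an admissible index $\bk$, the right-hand side of \eqref{eq:main} is $\sum_{\wt(\be)=m,\ \dep(\be)=\dep(\bk^\dagger)}\zeta^t((\bk^\dagger\oplus\be)^\dagger)$; taking $\bk=(r+1)$ (so $\dep(\bk)=1$), its dual is $\bk^\dagger=(\{1\}^{r-1},2)$ of depth $r$. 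Then $(\bk^\dagger\oplus\be)^\dagger$ ranges, as $\be$ runs over depth-$r$ tuples of total weight $m$, precisely over the duals of indices $(\{1\}^{r-1},2)\oplus\be$; and the dual of any such index is an admissible index of depth $\le r$ and weight $r+1+m$. The key combinatorial fact I would verify is that, as $\be$ ranges over all depth-$r$ weight-$m$ tuples, the multiset of indices $((\{1\}^{r-1},2)\oplus\be)^\dagger$ is exactly the multiset of \emph{all} admissible indices $(k_1,\dots,k_r)$ of weight $k:=r+1+m$ with $k_r\ge 2$ (each occurring once) — this is the standard duality bijection underlying the sum formula, and is where depth is preserved by summing over the right set.

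With that identification, the right-hand side of \eqref{eq:main} for $\bk=(r+1)$ and $m=k-r-1$ becomes exactly $\sum_{k_1+\cdots+k_r=k,\ k_1,\dots,k_{r-1}\ge 1,\ k_r\ge 2}\zeta^t(k_1,\dots,k_r)$, the left-hand side of the sum formula. So it remains to compute the left-hand side of \eqref{eq:main}, which by \eqref{eq:dep1} equals $f_0((r+1)+1,\,k-r-1)\,\zeta((r+1)+(k-r-1)) = f_0(r+2,k-r-1)\,\zeta(k)$. The final step is the explicit evaluation
\begin{align*}
f_0(r+2,k-r-1)=\sum_{j=0}^{k-r-1}\binom{k-r-1-j}{0}\binom{k-j-2}{j}t^j(1-t)^{k-r-1-j}
\end{align*}
and showing this equals $\sum_{j=0}^{r-1}\binom{k-1}{j}t^j(1-t)^{r-1-j}$; here I would use the definition $f_i(k,e)=\sum_{j=0}^{e}\binom{e-j}{i}\binom{k+e-i-2}{j}t^j(1-t)^{e-i-j}$ with $i=0$, $e=k-r-1$, and the first binomial argument $r+2$, and then rewrite the sum by a change of index and a binomial identity (or, more slickly, by recognizing $f_0(r+2,k-r-1)$ as a coefficient extraction in the generating-function computation of Lemma~\ref{lem:gen_h}: the depth-one generating function is $y\frac{1}{1-xu}\left(\frac{x}{1-xtu}\right)^{k_1-1}$, so $f_0(k+1,m)$ is the coefficient of $u^m$ in $(1-u)^{-1}(1-tu)^{-(k-1)}$, from which the stated polynomial in $t$ falls out).

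The only real subtlety is bookkeeping: matching the depth ranges and the admissibility condition on both sides, and checking that the duality map gives each admissible index of weight $k$ with last entry $\ge 2$ exactly once. This is the standard fact that Ohno's relation specializes (at the single admissible index of depth one) to the sum formula, so the argument is essentially a transcription of that classical deduction into the interpolated setting, using that $\zeta^t$ is obtained from $\zeta$ by the same operator $I^t$ on both sides. I do not expect a genuine obstacle here — the weight-$t$ enumeration $\sum_{j=0}^{r-1}\binom{k-1}{j}t^j(1-t)^{r-1-j}$ is precisely $f_0(r+2,k-r-1)$, and everything else is the duality bijection already implicit in Theorem~\ref{thm:Ohno-type_MZV}.
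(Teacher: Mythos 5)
Your overall strategy---specialize Theorem \ref{thm:main}, via the depth-one formula \eqref{eq:dep1}, to a single admissible index of depth one---is exactly the paper's, but your duality bookkeeping is off, and it breaks precisely the two steps you flag as ``the key facts to verify.'' With $\bk=(r+1)$ and $m=k-r-1$, the indices $\bk^{\dagger}\oplus\be=(\{1\}^{r-1},2)\oplus\be$ do run over all admissible indices of weight $k$ and depth $r$; but duality sends an admissible index of weight $k$ and depth $d$ to one of weight $k$ and depth $k-d$, so the duals $(\bk^{\dagger}\oplus\be)^{\dagger}$ run over all admissible indices of weight $k$ and depth $k-r$, not depth $r$ (in general the indices appearing on the right of \eqref{eq:main} have depth $m+\dep(\bk)$). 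Hence your claimed combinatorial fact is false, and so is the polynomial identity you would need at the end: one has $f_0(r+2,k-r-1)=\sum_{j=0}^{k-r-1}\binom{k-1}{j}t^{j}(1-t)^{k-r-1-j}$, which is not $\sum_{j=0}^{r-1}\binom{k-1}{j}t^{j}(1-t)^{r-1-j}$ unless $k=2r$ (compare the values at $t=1$: $\binom{k-1}{r}$ versus $\binom{k-1}{r-1}$).

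The fix is simply the opposite choice of parameters, which is what the paper makes: take $\bk=(k-r+1)$ and $m=r-1$. Then $\bk^{\dagger}=(\{1\}^{k-r-1},2)$ has depth $k-r$, the indices $\bk^{\dagger}\oplus\be$ exhaust the admissible indices of weight $k$ and depth $k-r$, and their duals exhaust those of weight $k$ and depth $r$, i.e.\ exactly the left-hand side of the sum formula; meanwhile \eqref{eq:dep1} gives $f_0(k-r+2,r-1)\,\zeta(k)=\bigl\{\sum_{j=0}^{r-1}\binom{k-1}{j}t^{j}(1-t)^{r-1-j}\bigr\}\zeta(k)$, as required. Equivalently, the computation you set up actually proves the sum formula with $k-r$ in place of $r$, so you could also salvage it by relabeling $r\mapsto k-r$ at the end; but as written, the two intermediate claims you propose to verify are not true, so the argument does not go through in the stated form.
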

\begin{proof}
Set $(R, Z)=(\bR, \zeta)$ and $\bk=(k)$. Then we have
\begin{equation}\label{eq:RHSofSF}
Z^{t}\bigl(g_{r}(\bk;t)\bigr)
=\Biggl\{\sum_{j=0}^{r}\binom{k+r-1}{j}t^{j}(1-t)^{r-j}\Bigg\}Z(k+r).
\end{equation}
On the other hand, we have
\begin{equation}\label{eq:LHSofSF}
\begin{split}
\sum_{\substack{\wt(\be)=r \\ \dep(\be)=k-1}}
Z^{t}\bigl((\bk^{\dagger} \oplus \be)^{\dagger}\bigr)
&=\sum_{\substack{\wt(\be)=r \\ \dep(\be)=k-1}}
Z^{t}\bigl(((\underbrace{1, \ldots, 1}_{k-2}, 2)\oplus \be)^{\dagger}\bigr)\\
&=\sum_{\substack{\wt(\bl)=k+r \\ \dep(\bl)=k-1 \\ \mathrm{admissible}}}
Z^{t}\bigl( \bl^{\dagger} \bigr)\\
&=\sum_{\substack{\wt(\bl)=k+r \\ \dep(\bl)=r+1 \\ \mathrm{admissible}}}
Z^{t}\bigl( \bl \bigr).
\end{split}
\end{equation}
Therefore, from Theorem~\ref{thm:main}, \eqref{eq:RHSofSF} and \eqref{eq:LHSofSF}, by replacing $k$ with $k-r+1$ and $r$ with $r-1$, we complete the proof.
\end{proof}
%


In the final part of this section, we give another proof of Seki's result in \cite{Sek17} on the sum formula for interpolated $\cF$-MZVs as a corollary of our theorem. For a positive integer $k$, we set
\begin{align*}
\frZ_{\cF}(k)
\coloneqq
\begin{cases}
\Biggl(\displaystyle\frac{B_{p-k}}{k} \bmod{p}\Biggr)_p \in \cA &(\cF=\cA), \\
\zeta(k) \bmod{\zeta(2)\cZ} \in \cZ/\zeta(2)\cZ & (\cF=\cS).
\end{cases}
\end{align*}

\begin{thm}[{Sum formula for interpolated $\cF$-MZVs, Seki \cite{Sek17}}]
For positive integers $k$ and $r$ with $k>r$, we have
\begin{align*}
\sum_{\substack{\text{$\bk$:admissible} \\ \wt(\bk)=k, \dep(\bk)=r}}
\zeta^{t}_{\cF}(\bk)
=\left[\sum_{j=0}^{r-1}\Biggl\{\binom{k-1}{j}+(-1)^r\binom{k-1}{r-1-j}\Biggr\}t^{j}(1-t)^{r-1-j}\right]\frZ_{\cF}(k).
\end{align*}
\end{thm}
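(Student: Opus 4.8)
The plan is to derive this sum formula for interpolated $\cF$-MZVs as a corollary of the Ohno-type relation for interpolated $\cF$-MZVs established in Theorem~\ref{thm:Ohno-type_tFMZV}, in close parallel with the proof just given for the interpolated MZV sum formula. Concretely, I would apply Theorem~\ref{thm:Ohno-type_tFMZV} to the depth-one index $\bk=(k)$ (or, to manage the weight/depth bookkeeping, to $\bk=(k-r+1)$ and shift indices at the end). On the right-hand side, $\bk^{\vee}=(\underbrace{1,\dots,1}_{k-1})$ has depth $k-1$ (or $k-r$), so the sum $\sum_{\wt(\be)=m,\ \dep(\be)=\dep(\bk^{\vee})}\zeta^{t}_{\cF}((\bk^{\vee}\oplus\be)^{\vee})$ runs over Hoffman duals of indices of fixed weight and depth; applying the Hoffman-dual bijection converts this into a sum of $\zeta^{t}_{\cF}(\bl)$ over all admissible $\bl$ with $\wt(\bl)=k$ and $\dep(\bl)=r$, exactly the left-hand side we want. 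This is the same combinatorial move as in \eqref{eq:LHSofSF}, only with $\dagger$ replaced by $\vee$ and $\da$-shifts tracked through.

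The left-hand side of Theorem~\ref{thm:Ohno-type_tFMZV} in depth one is $\zeta^{t}_{\cF}\bigl(g_m((k)_{\ua};t)_{\da}\bigr)=\zeta^{t}_{\cF}\bigl(g_m((k+1);t)_{\da}\bigr)$, and by \eqref{eq:dep1} we have $g_m((k+1);t)=f_0(k+2,m)\bigl((k+1)\oplus(m)\bigr)$, so after the $\da$-shift this is $f_0(k+2,m)\,\zeta^{t}_{\cF}(k+m)=f_0(k+2,m)\,\frZ_{\cF}(k+m)$ (using that $\zeta^{t}_{\cF}$ of a single index $(k+m)$ reduces to $\frZ_{\cF}(k+m)$, since the interpolation is trivial at depth one). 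Thus the formula we obtain directly from Theorem~\ref{thm:Ohno-type_tFMZV} reads
\begin{align*}
f_0(k+2,m)\,\frZ_{\cF}(k+m)
=\sum_{\substack{\text{$\bl$:admissible}\\ \wt(\bl)=k+m+\text{(shift)},\ \dep(\bl)=\cdots}}\zeta^{t}_{\cF}(\bl),
\end{align*}
which after the substitution $k\mapsto k-r+1$, $m\mapsto r-1$ gives a one-sided sum formula. The remaining task is to show that the resulting scalar $f_0(k+1,r-1)$ equals $\tfrac12\left[\sum_{j=0}^{r-1}\bigl\{\binom{k-1}{j}+(-1)^r\binom{k-1}{r-1-j}\bigr\}t^j(1-t)^{r-1-j}\right]$ — wait: the $\cF$-value sum formula has the extra symmetrization term $(-1)^r\binom{k-1}{r-1-j}$ that the classical one lacks, reflecting that $\cF$-MZVs of weight-$k$ depth-$r$ indices coincide (up to sign $(-1)^r$) with their reversals. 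So I would combine the one-sided identity above with its "reversed" counterpart: because $\zeta_{\cF}(\bl)=(-1)^{\wt(\bl)}\zeta_{\cF}(\overline{\bl})$ for the reversal $\overline{\bl}$ (a known duality for $\cF$-MZVs of both types, cf.\ the definitions of $\zeta_{\cA}$ and $\zeta_{\cS}$), averaging the identity with the one obtained by reversing each index produces precisely the symmetrized bracket, with the scalar $f_0(k+1,r-1)=\sum_{j=0}^{r-1}\binom{k-1}{j}t^j(1-t)^{r-1-j}$ accounting for the $j$-term and the reversal contributing the $(-1)^r\binom{k-1}{r-1-j}$ term after re-indexing $j\mapsto r-1-j$.

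I expect the main obstacle to be the bookkeeping around the $\da$-shift and the Hoffman-dual bijection in a way that is compatible with the admissibility constraints and the shift in weight: one must verify carefully that $g_m((k+1);t)_{\da}$ is well-defined (i.e.\ that no entry drops to $0$ — here it does not, since $(k+1)\oplus(m)=(k+1+m)$ has a single entry $\ge 2$) and that the identity $\sum_{\wt(\be)=m,\,\dep(\be)=\dep((\bk_{\da})^{\vee})}((\bk_{\da})^{\vee}\oplus\be)^{\vee}=\sum_{\wt(\be)=m,\,\dep(\be)=\dep(\bk^{\dagger})}((\bk^{\dagger}\oplus\be)^{\dagger})_{\da}$ used in the proof of Theorem~\ref{thm:Ohno-type_tFMZV} specializes correctly at depth one. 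The second delicate point is invoking the reversal antipode relation for $\zeta^{t}_{\cF}$: one needs it to intertwine with the interpolation $I^t$ (it does, since reversal of $\square$-strings is a bijection preserving the number of $+$'s), and to hold for both $\cF=\cA$ and $\cF=\cS$ (for $\cA$ it is the substitution $n_i\mapsto p-n_{r+1-i}$; for $\cS$ it is built into the symmetric shuffle-regularized definition). Once these two structural facts are in hand, the scalar identities are the elementary binomial manipulations $f_0(k+1,r-1)=\sum_{j=0}^{r-1}\binom{k-1}{j}t^j(1-t)^{r-1-j}$ and the re-indexing $j\leftrightarrow r-1-j$, which I would not belabor.
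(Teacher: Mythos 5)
Your plan breaks at both ends of the Ohno-type relation, so the ``one-sided sum formula'' you intend to symmetrize is not actually what Theorem~\ref{thm:Ohno-type_tFMZV} gives. On the right-hand side: the Hoffman dual is a bijection on \emph{all} non-empty indices (it exchanges weight-$k$ indices of depth $d$ with those of depth $k+1-d$, with no admissibility built in), so applying the theorem once with $\bk=(k-r+1)$, $m=r-1$ produces $\sum\zeta^{t}_{\cF}(\bl)$ over \emph{all} indices $\bl$ with $\wt(\bl)=k$, $\dep(\bl)=r$, including those ending in $1$ --- not the admissible sum you want. This is exactly why the paper takes the \emph{difference} of two instances, $\bk_1=(k-r+1)$, $m=r-1$ and $\bk_2=(k-r+1,1)$, $m=r-2$: the second instance subtracts off precisely the non-admissible contributions (see \eqref{eq:RHS}). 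On the left-hand side: for a depth-one index, $\zeta^{t}_{\cF}(k+m)=\zeta_{\cF}(k+m)=0$; it does \emph{not} equal $\frZ_{\cF}(k+m)$, which is a separately defined quantity ($B_{p-k}/k \bmod p$, resp.\ $\zeta(k)\bmod \zeta(2)\cZ$). So your single instance actually yields ``$0=\sum_{\wt(\bl)=k,\,\dep(\bl)=r}\zeta^{t}_{\cF}(\bl)$ (unrestricted)'', a true but different statement, and the identity $f_0(k+1,r-1)\,\frZ_{\cF}(k)=\sum_{\text{adm}}\zeta^{t}_{\cF}(\bk)$ you propose to average with its reversal is false. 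Averaging a false identity with the (genuine) reversal relation cannot repair this.

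In the paper, $\frZ_{\cF}(k)$ enters only through the depth-two evaluation $\zeta_{\cF}(a,b)=(-1)^{b}\binom{a+b}{a}\frZ_{\cF}(a+b)$ applied to the $\bk_2$-instance (the depth-one terms all vanish), and the symmetrized coefficient $\binom{k-1}{j}+(-1)^{r}\binom{k-1}{r-1-j}$ comes out of the nontrivial binomial identity \eqref{eq:claim}, proved via a beta-integral computation --- neither of these ingredients appears in your proposal, and without them the argument cannot reach the stated formula. If you want to keep a reversal-type symmetry in the picture, you would have to restructure the whole derivation; as written, the missing subtraction of the non-admissible terms and the misevaluation $\zeta^{t}_{\cF}(\text{single index})=\frZ_{\cF}$ are fatal gaps.
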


\begin{proof}
To prove this theorem, we need to calculate the difference between Theorem~\ref{thm:Ohno-type_tFMZV} with ``$\bk=\bk_1\coloneqq (k-r+1), m=r-1$" and ``$\bk=\bk_2 \coloneqq (k-r+1,1), m=r-2$" substituted.
First, we have
\begin{equation}\label{eq:RHS}
\begin{split}
&\sum_{\substack{\wt(\be)=r-1 \\ \dep(\be)=\bk^{\vee}_1}}
\zeta^{t}_{\cF}\bigr((\bk^{\vee}_1 \oplus \be)^{\vee}\bigl)
-\sum_{\substack{\wt(\be)=r-2 \\ \dep(\be)=\bk^{\vee}_2}}
\zeta^{t}_{\cF}\bigr((\bk^{\vee}_2 \oplus \be)^{\vee}\bigl)\\
&=\sum_{\substack{\wt(\be)=r-1 \\ \dep(\be)=k-r+1}}
\zeta^{t}_{\cF}\bigr(((\{1\}^{k-r+1}) \oplus \be)^{\vee}\bigl)
-\sum_{\substack{\wt(\be)=r-2 \\ \dep(\be)=k-r+1}}
\zeta^{t}_{\cF}\bigr(((\{1\}^{k-r}, 2) \oplus \be)^{\vee}\bigl)\\
&=\sum_{\substack{l_1+\cdots+l_{k-r}=k-1 \\ l_1, \ldots, l_{k-r}\ge1}}
\zeta^{t}_{\cF}\bigl((l_1, \ldots, l_{k-r}, 1)^{\vee}\bigr)
=\sum_{\substack{k_1+\cdots+k_r=k \\ k_1, \ldots, k_{r-1}\ge1, k_r\ge2}}
\zeta^{t}_{\cF}(k_1, \ldots, k_r).
\end{split}
\end{equation}
On the other hand, by the definition of $g_m(\bk;t)$, $\zeta^{}_{\cF}(a, b)=(-1)^{b}\binom{a+b}{a}\frZ_{\cF}(a+b) \; (a, b \in \bZ_{\ge1})$, and $\zeta^{}_{\cF}(a)=0 \; (a \in \bZ_{\ge1})$ (see \cite{Kan19}, for example), we have
\begin{equation}\label{eq:LHS}
\begin{split}
&\zeta^{t}_{\cF}\bigl(g_{r-1}((\bk_1)_{\ua};t)_{\da}\bigr)-
\zeta^{t}_{\cF}\bigl(g_{r-2}((\bk_2)_{\ua}; t)_{\da}\bigr)\\
&=-\sum_{\substack{e_1+e_2=r-2 \\ e_1, e_2 \ge0}}
\Biggl\{\sum_{j_1=0}^{e_1}\binom{k-r+e_1}{j_1}t^{j_1}(1-t)^{e_1-j_1}\Biggr\}\\
&\quad\times\Biggl\{\sum_{j_2=0}^{e_2}\binom{e_2}{j_2}t^{j_2}(1-t)^{e_2-j_2}\Biggr\}\zeta^{}_{\cF}(k-r+1+e_1, 1+e_2)\\
&=\sum_{e=0}^{r-2}(-1)^{r-e}
\Biggl\{\sum_{j=0}^{e}\binom{k-r+e}{j}t^{j}(1-t)^{e-j}\Biggr\}
\binom{k}{r-e-1}\frZ_{\cF}(k).
\end{split}
\end{equation}
Therefore, from Theorem~\ref{thm:Ohno-type_tFMZV}, \eqref{eq:RHS}, and \eqref{eq:LHS}, it suffices to prove that
\begin{equation}\label{eq:claim}
\begin{split}
&\sum_{e=0}^{r-2}(-1)^{r-e}\Biggl\{\sum_{j=0}^{e}\binom{k-r+e}{j}t^{j}(1-t)^{e-j}\Biggr\}\binom{k}{r-e-1}\\
&=\sum_{j=0}^{r-1}\Biggl\{\binom{k-1}{j}+(-1)^r\binom{k-1}{r-1-j}\Biggr\}t^{j}(1-t)^{r-1-j}.
\end{split}
\end{equation}
This equality can be proved as follows. Since $1=\sum_{s=0}^{r-e-1}\binom{r-e-1}{s}t^{s}(1-t)^{r-e-1-s}$, we have
\begin{align*}
&\sum_{e=0}^{r-2}(-1)^{r-e}\Biggl\{\sum_{j=0}^{e}\binom{k-r+e}{j}t^{j}(1-t)^{e-j}\Biggr\}\binom{k}{r-e-1}\\
&=\sum_{e=0}^{r-2}\sum_{j=0}^{e}\sum_{s=0}^{r-1-e}(-1)^{r-e}\binom{k-r+e}{j}\binom{k}{r-e-1}\binom{r-e-1}{s}t^{j+s}(1-t)^{r-1-j-s}\\
&=\sum_{\substack{j+s\le r-1 \\ j, s \ge0}}\sum_{e=j}^{\min(r-s-1, r-2)}(-1)^{r-e}
\frac{k!}{j!s!(k-r+e-j)!(r-e-1-s)!}\frac{t^{j+s}(1-t)^{r-1-j-s}}{k-r+e+1}\\
&=A+\sum_{j=0}^{r-1}\binom{k-1}{j}t^j(1-t)^{r-1-j},
\end{align*}
where, we set
\begin{align*}
A
\coloneqq
\sum_{\substack{j+s\le r-1 \\ j, s \ge0}}\sum_{e=j}^{r-s-1}(-1)^{r-e}
\frac{k!}{j!s!(k-r+e-j)!(r-e-1-s)!}\frac{t^{j+s}(1-t)^{r-1-j-s}}{k-r+e+1}.
\end{align*}
By setting $u \coloneqq j+s$ and $m \coloneqq e-j$, we have
\begin{align*}
A
&=\sum_{u=0}^{r-1}t^{u}(1-t)^{r-1-u}\sum_{j=0}^{u}\sum_{m=0}^{r-1-u}(-1)^{r-j-m}\\
&\quad \times\frac{k!}{j!(u-j)!(k-r+m)!(r-1-u-m)!}\frac{1}{k-r+m+j+1}\\
&=\sum_{u=0}^{r-1}t^u(1-t)^{r-1-u}\sum_{m=0}^{r-1-u}(-1)^{r-m}\frac{k!}{(k-r+m)!(r-1-u-m)!u!}\\
&\quad \times \sum_{j=0}^u(-1)^{j}\binom{u}{j}\frac{1}{k-r+m+1+j}.
\end{align*}
Moreover, since
\begin{align*}
\sum_{j=0}^u(-1)^{j}\binom{u}{j}\frac{1}{k-r+m+1+j}
&=\int^{1}_{0}\sum_{j=0}^u(-1)^{j}\binom{u}{j}z^{k-r+m+j}dz\\
&=\int^{1}_{0}z^{k-r+m}(1-z)^{u}dz=\frac{u!(k-r+m)!}{(u+k-r+m+1)!},
\end{align*}
we have
\begin{align*}
A
&=\sum_{u=0}^{r-1}t^u(1-t)^{r-1-u}\sum_{m=0}^{r-1-u}(-1)^{r-m}\frac{k!}{(r-1-u-m)!(u+k-r+m+1)!}\\
&=\sum_{u=0}^{r-1}t^u(1-t)^{r-1-u}\sum_{n=0}^{r-1-u}(-1)^{u+1+n}\binom{k}{n}\\
&=\sum_{u=0}^{r-1}t^u(1-t)^{r-1-u}(-1)^{r}\binom{k-1}{r-1-u}.
\end{align*}
Thus we obtain \eqref{eq:claim}, which completes the proof.
\end{proof}

\section*{Acknowledgement}
This research was supported in part by JSPS KAKENHI Grant Numbers JP16H06336, JP18J00982 and JP18K13392.

\end{document}